\documentclass[11pt]{amsart}
%%%%%%%%%%%%%%%%%%%%%%%%%%%%%%%%%%
\date{September 21, 2010}
%%%%%%%%%%%%%%%%%%%%%%%%%%%%%%%%%%
\usepackage{graphicx, color, enumerate}
\usepackage{amsmath, amssymb, amscd, mathrsfs}
\usepackage[varg]{txfonts}
%%%%%%%%%%%%%%%%%%%%%%%%%%%%%%%%%%
\title[Isometric Immersions of $\H^2$ into $\H^3$]{%
  Isometric Immersions of the Hyperbolic Plane\\
  into the Hyperbolic Space
}
\author[A. Honda]{Atsufumi Honda}
\address{%
   Department of Mathematics,
   Graduate School of Science and Engineering\endgraf
   Tokyo Institute of Technology\endgraf
   O-okayama, Meguro, Tokyo 152-8551\endgraf
   Japan
}
\email{10d00059@math.titech.ac.jp}
%\dedicatory{Dedicated to the sixtieth birthday of Professor}
%%%%%%%%%%%%%%%%%%%%%%%%%%%%%%%%%%
\subjclass[2010]{%
Primary 53A35; % 53A35 Non-Euclidean differential geometry
Secondary 53C22, % 53C22 Geodesics 
53C50. %53C50 Lorentz manifolds, manifolds with indefinite metrics
}
\keywords{%
 hyperbolic space, developable surface, null curve,
 K\"ahler and para-K\"ahler structure, minitwistor space.
}
%%%%%%%%%%%%%%%%%%%%%%%%%%%%%%%%%%
\setlength{\textheight}{22cm}
\setlength{\textwidth}{14cm}
\setlength{\oddsidemargin}{1cm}
\setlength{\evensidemargin}{1cm}
\setlength{\topmargin}{0cm}
%
%\renewcommand{\baselinestretch}{1.1}
%%%%%%%%%%%%%%%%%%%%%%%%%%%%%%%%%%
% Theorem-like Environments
%%%%%%%%%%%%%%%%%%%%%%%%%%%%%%%%%%
\usepackage{amsthm}
\theoremstyle{plain}
 \newtheorem{theorem}{Theorem}[section]
 \newtheorem{introtheorem}{Theorem}
   
 \newtheorem{proposition}[theorem]{Proposition}
 
 \newtheorem{lemma}[theorem]{Lemma}
 \newtheorem{corollary}[theorem]{Corollary}
 \theoremstyle{remark}
 \newtheorem{definition}[theorem]{Definition}
 \newtheorem{remark}[theorem]{Remark}
 \newtheorem*{acknowledgements}{Acknowledgements}
 \newtheorem{example}[theorem]{Example}
\numberwithin{equation}{section}

%%%%%%%%%%%%%%%%%%%%%%%%%%%%%%%%%%
\newcommand{\N}{\boldsymbol{N}}
\newcommand{\Z}{\boldsymbol{Z}}

\newcommand{\R}{\boldsymbol{R}}
\newcommand{\C}{\boldsymbol{C}}
\newcommand{\Herm}{\operatorname{Herm}}
\newcommand{\GL}{\operatorname{GL}}
\newcommand{\SL}{\operatorname{SL}}
\newcommand{\PSL}{\operatorname{PSL}}
\newcommand{\U}{\operatorname{U}}
\newcommand{\SU}{\operatorname{SU}}
\newcommand{\SO}{\operatorname{SO}}
%%%%%%%%%%%%%%%%%%%%%%%%%%%%%%%%%%
\newcommand{\bmath}[1]{\mbox{\boldmath $#1$}}
\renewcommand{\H}{{\bmath{H}}}
\renewcommand{\S}{{\bmath{S}}}
\renewcommand{\P}{\bmath{P}}
\renewcommand{\L}{{\bmath{L}}}
\newcommand{\B}{{\bmath{B}}}
\newcommand{\D}{{\bmath{D}}}
%%%%%%%%%%%%%%%%%%%%%%%%%%%%%%%%%%
\newcommand{\m}{\mu}
\newcommand{\trace}{\operatorname{trace}}
\newcommand{\inner}[2]{\left\langle{#1},{#2}\right\rangle}
\renewcommand{\Re}{\operatorname{Re}}
\renewcommand{\Im}{\operatorname{Im}}
\newcommand{\vect}[1]{\boldsymbol{#1}}
\newcommand{\Vect}[1]{\overrightarrow{#1}}
\renewcommand{\l}{\lambda}
\newcommand{\LH}{L\H^3}
\newcommand{\LR}{L\R^3}

\newcommand{\LoH}{L_{o}\H^3}
\newcommand{\bihol}{\stackrel{\sim}{\longrightarrow}}
%%%%%%%%%%%%%%%%%%%%%%%%%%%%%%%%%%
\begin{document}
\maketitle
\begin{abstract}
 In this paper, we parametrize the space of isometric immersions 
 of the hyperbolic plane into the hyperbolic $3$-space 
 in terms of null-causal curves in the space of oriented geodesics. 
 Moreover, we characterize ``ideal cones'' 
 (i.e., cones whose vertices are on the ideal boundary) 
 by behavior of their mean curvature.
\end{abstract}
%%%%%%%%%%%%%%%%%%%%%%%%%%%%%%%%%%
\section*{Introduction}
\begingroup
\renewcommand{\theequation}{\arabic{equation}}
Consider isometric immersions of 
$\tilde{\Sigma}^n(c)$ into $\tilde{\Sigma}^{n+1}(c)$,
where $\tilde{\Sigma}^m(c)$ denotes the simply connected 
$m$-dimensional space form of constant sectional curvature $c$.
Such immersions are only cylinders \cite{HN} 
in the Euclidean case $(c=0)$.
In the spherical case $(c>0)$,
such immersions are only totally geodesic embeddings \cite{OS}.
On the other hand, in the hyperbolic case $(c<0)$, 
it is well-known that there are nontrivial examples of 
such isometric immersions \cite{Nomizu,Ferus,AbeHaas}
(see Figure~\ref{fig:Nomizu} for the case of $n=2$).

\begin{figure}[htb]
 \begin{tabular}{{c@{\hspace{10mm}}c@{\hspace{10mm}}c@{\hspace{10mm}}c}}
  \resizebox{2.5cm}{!}{\includegraphics{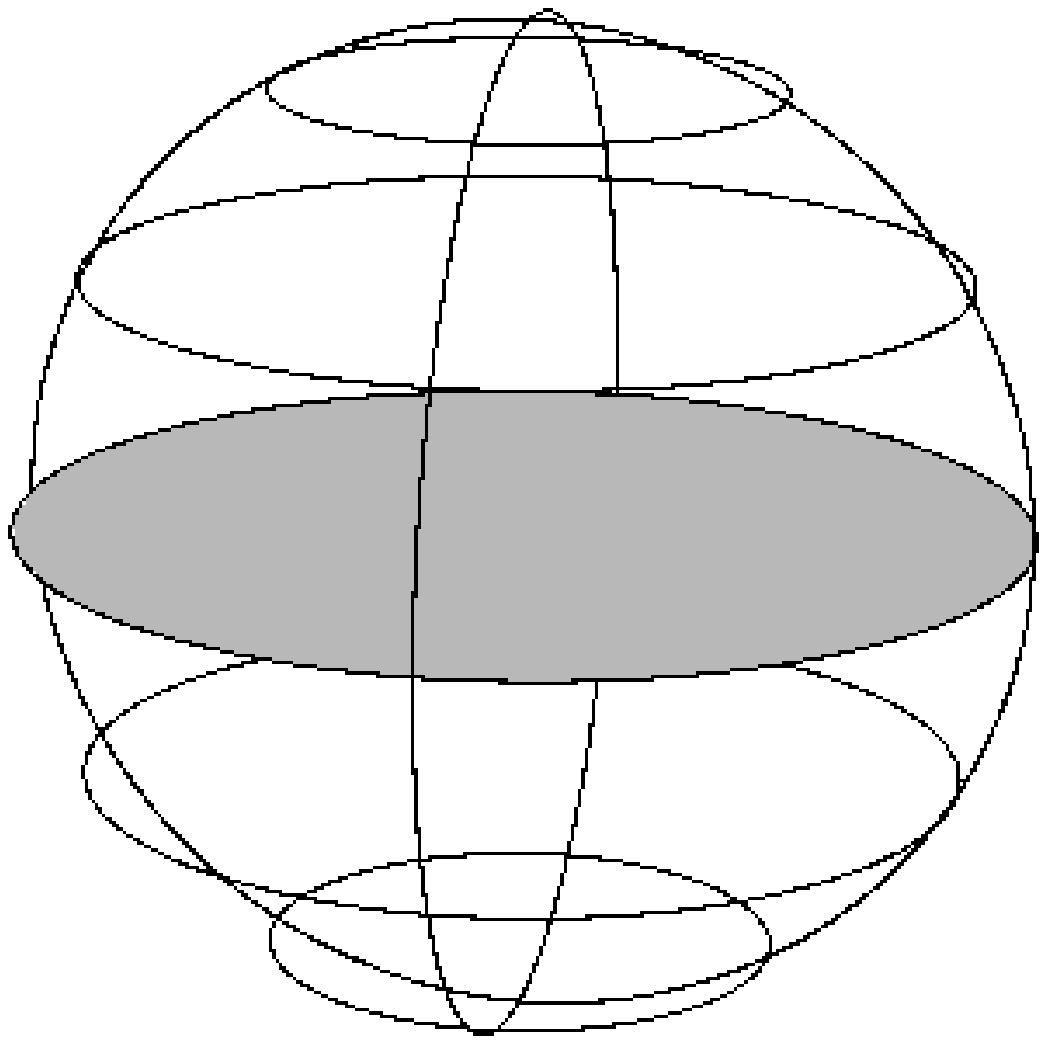}} &
  \resizebox{2.5cm}{!}{\includegraphics{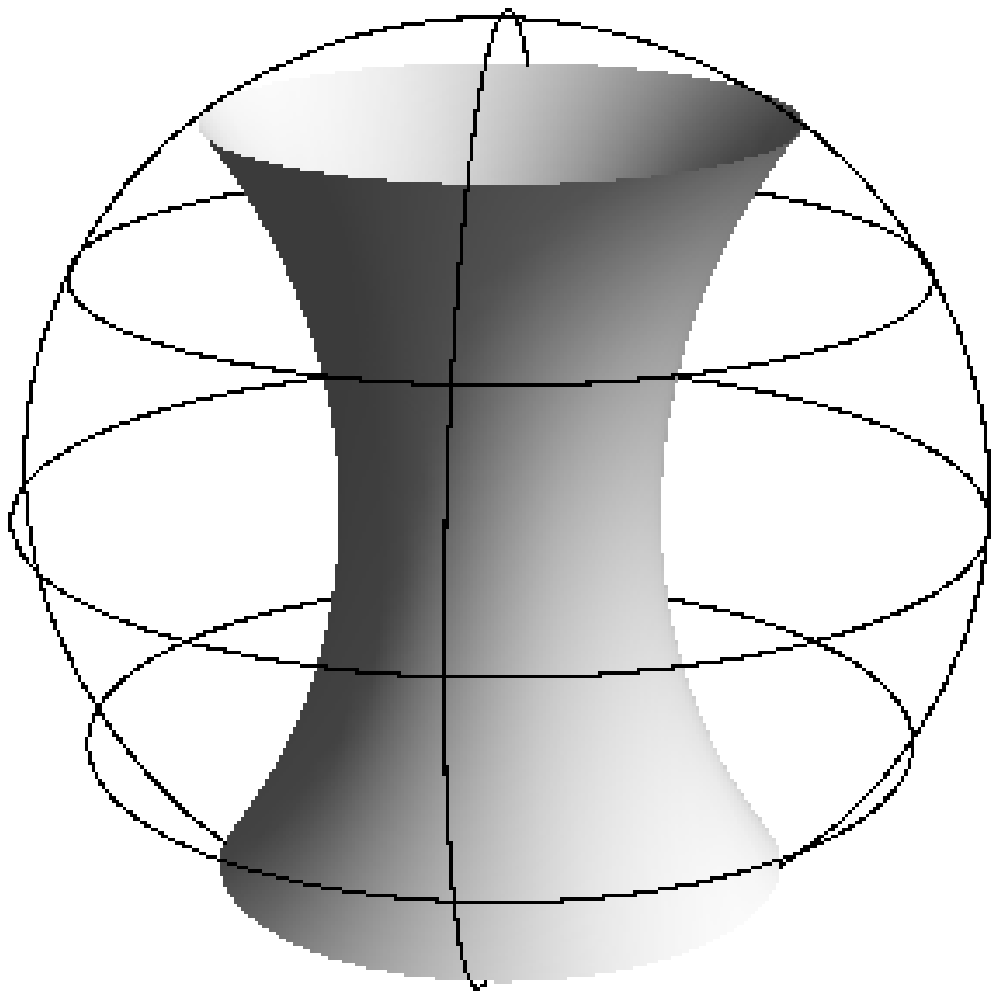}} &
  \resizebox{2.5cm}{!}{\includegraphics{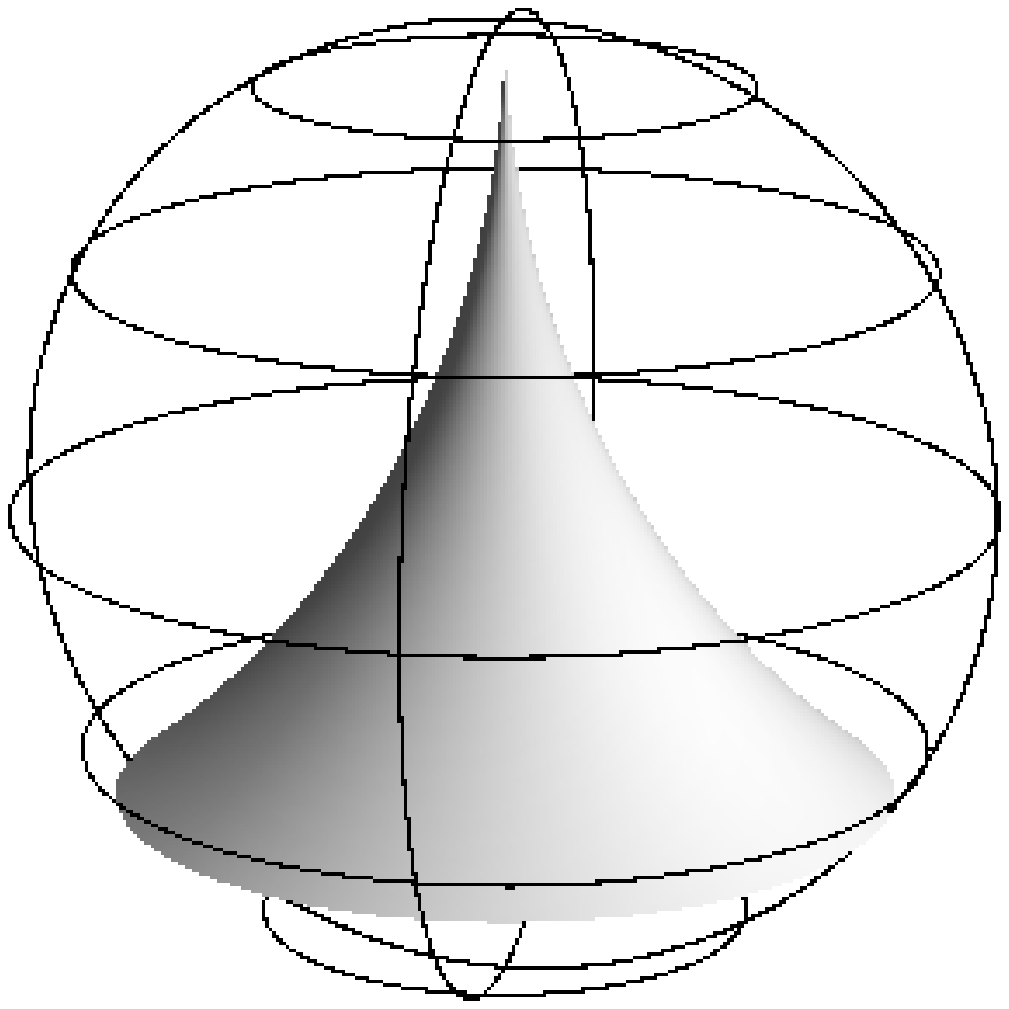}}&
  \resizebox{2.5cm}{!}{\includegraphics{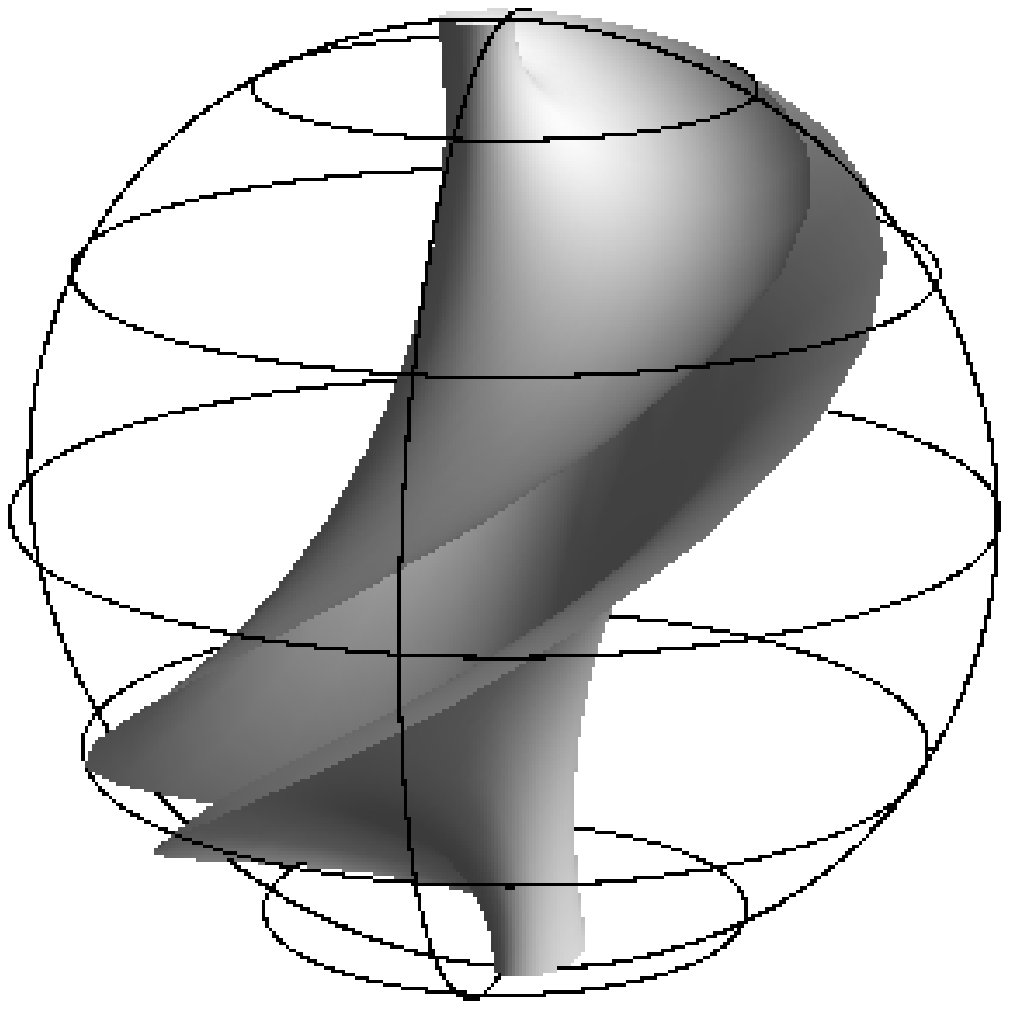}} \\
  {\footnotesize  (A) totally geodesic} &
  {\footnotesize  (B) Example \ref{ex:1}} &
  {\footnotesize  (C) Example \ref{ex:2}} &
  {\footnotesize  (D) Example \ref{ex:3}}
 \end{tabular}
 \caption{Examples constructed by Nomizu \cite{Nomizu} (see Section 3).}
 \label{fig:Nomizu}
\end{figure}

\noindent
We denote by $\H^n=\tilde{\Sigma}^n(-1)$ the $n$-dimensional hyperbolic space, 
that is, the complete simply connected and connected Riemannian manifold 
of constant curvature $-1$.
Nomizu \cite{Nomizu} and Ferus \cite{Ferus} showed that,
for a given $C^{\infty}$ totally geodesic foliation of codimension $1$ in $\H^n$,
there is a family of isometric immersions of $\H^n$ into $\H^{n+1}$ 
without umbilic points such that, for each immersion, the foliation defined by 
its asymptotic distribution coincides with the given foliation.
Furthermore, Abe, Mori and Takahashi \cite{AbeMoriTakahashi} 
parametrized the space of isometric immersions of $\H^n$ into $\H^{n+1}$
by a family of properly chosen countably many $\R^n$-valued functions.

In this paper, we shall give another parametrization in the case of $n=2$:
we represent isometric immersions of $\H^2$ into $\H^3$ 
by curves in the space $\LH$ of oriented geodesics in $\H^3$.
Moreover, we characterize certain asymptotic behavior of such immersions 
in terms of their mean curvature.

More precisely, an isometric immersion of $\H^2$ into $\H^3$ 
is a complete {\it extrinsically flat\/} surface in $\H^3$,
that is, a complete surface whose extrinsic curvature vanishes.
It is known that a complete extrinsically flat surface is {\it ruled\/},
i.e., a locus of a $1$-parameter family of geodesics in $\H^3$ 
\cite{Portnoy} (see Proposition \ref{prop:Portnoy}).
Hence, we shall deal with extrinsically flat ruled surfaces:\ 
{\it developable\/} surfaces in $\H^3$.
On the other hand, it is well-known that the space of oriented geodesics $\LH$
has two significant geometric structures: 
the natural complex structure $J$ \cite{Hitchin,GG} 
and the para-complex structure $P$ \cite{Kaneyuki,Kanai,Kimura}.
Recently, Salvai \cite{Salvai} determined the family of metrics 
$\{\mathcal{G}_{\theta}\}_{\theta \in S^1}$ 
each of which is invariant under the action of the identity component 
of the isometry group of $\H^3$.
Each metric $\mathcal{G}_{\theta}$ is of neutral signature, 
K\"ahler with respect to $J$ and para-K\"ahler with respect to $P$.
In this paper,
we especially focus on two neutral metrics 
$\mathcal{G}^{\mathfrak{r}}=\mathcal{G}_0$ 
and $\mathcal{G}^{\mathfrak{i}}=\mathcal{G}_{\pi/2}$ 
in $\{\mathcal{G}_{\theta}\}_{\theta \in S^1}$.
In Section~\ref{sec:geomstr}, 
we shall investigate the relationships among 
$J$, $P$,  $\{\mathcal{G}_{\theta}\}_{\theta \in S^1}$ 
and the canonical symplectic form on $\LH$,
and give a characterization of $\mathcal{G}^{\mathfrak{i}}$ 
and $\mathcal{G}^{\mathfrak{r}}$ (Proposition \ref{prop:symplectic}).
In Section~\ref{sec:null}, 
we introduce a representation formula for developable surfaces in $\H^3$ 
in terms of {\it null-causal curves\/} (Proposition \ref{prop:representation}):
\begin{introtheorem}\label{thm:null}
  A curve in $\LH$ which is null with respect to $\mathcal{G}^{\mathfrak{i}}$ 
  and causal with respect to $\mathcal{G}^{\mathfrak{r}}$ 
  generates a developable surface in $\H^3$.
  Conversely, any developable surface generated by complete 
  geodesics in $\H^3$ is given in this manner.
\end{introtheorem}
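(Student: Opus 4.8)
The plan is to run everything through the standard identification of the tangent space $T_\gamma \LH$ at an oriented geodesic $\gamma$ with the space of normal Jacobi fields along $\gamma$, and to read off both the developability condition and the two metric conditions from the coefficients of such a field. Since $\H^3$ has constant curvature $-1$, a Jacobi field orthogonal to the unit tangent $T$ of $\gamma$ solves $Y''-Y=0$, hence has the form $Y(s)=e^{s}A+e^{-s}B$ with $A,B$ parallel normal fields; the pair $(A,B)\in\R^2\oplus\R^2$ trivializes $T\LH$ compatibly with the complex structure $J$ (simultaneous $90^\circ$ rotation of $A$ and $B$) and the para-complex structure $P$ (the splitting $A\leftrightarrow B$). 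Using the descriptions of $\mathcal{G}^{\mathfrak{i}}=\mathcal{G}_{\pi/2}$ and $\mathcal{G}^{\mathfrak{r}}=\mathcal{G}_0$ furnished by Proposition~\ref{prop:symplectic}, I expect, up to positive normalization,
\[
 \mathcal{G}^{\mathfrak{i}}(Y,Y)=\langle A,JB\rangle,\qquad \mathcal{G}^{\mathfrak{r}}(Y,Y)=-\langle A,B\rangle,
\]
so that the null condition for $\mathcal{G}^{\mathfrak{i}}$ reads $\langle A,JB\rangle=0$ (equivalently $A\parallel B$) and the causal condition for $\mathcal{G}^{\mathfrak{r}}$ reads $\langle A,B\rangle\ge 0$.

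For the forward implication I would take a curve $t\mapsto\gamma(t)$ satisfying both conditions and form the ruled map $\Gamma(s,t)$ whose $s$-curves parametrize $\gamma(t)$ by arc length. Then $V:=\partial_t\Gamma$ is a Jacobi field along each ruling, and its normal part $V^\perp=e^{s}A+e^{-s}B$ represents $\dot\gamma$. Because each ruling is a geodesic, $\mathrm{II}(T,T)=0$, so $K_{\mathrm{ext}}=-\mathrm{II}(T,V^\perp)^2/\det I$, and the surface is developable exactly when $\mathrm{II}(T,V^\perp)=\langle(V^\perp)',N\rangle=0$. Since the unit normal lies in the normal plane of $\gamma$ and is orthogonal to $V^\perp$, we may take $N\parallel JV^\perp$, and a short computation yields the $s$-independent value $\langle(V^\perp)',JV^\perp\rangle=2\langle A,JB\rangle$; thus the null condition is precisely developability. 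The causal condition then gives $V^\perp=(e^{s}+\lambda e^{-s})A$ with $\lambda\ge 0$ (writing $B=\lambda A$), which is exactly the absence of a zero of $V^\perp$, i.e.\ the rulings develop no focal point and $\Gamma$ is an immersion along the complete geodesics. Feeding this into the representation formula of Proposition~\ref{prop:representation} produces the developable surface.

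The converse runs the same identifications backwards. Starting from a developable surface swept out by complete geodesics, Proposition~\ref{prop:Portnoy} guarantees it is ruled and hence determines a curve in $\LH$; vanishing of $K_{\mathrm{ext}}$ forces $\langle A,JB\rangle=0$, so the curve is null for $\mathcal{G}^{\mathfrak{i}}$, while regularity of the immersion along each entire geodesic forces $V^\perp\neq 0$ throughout, hence $\lambda\ge 0$ and $\langle A,B\rangle\ge 0$, so the curve is causal for $\mathcal{G}^{\mathfrak{r}}$.

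I expect the main obstacle to lie not in the geometry but in the first paragraph: pinning down the precise coefficients of $\mathcal{G}^{\mathfrak{i}}(Y,Y)$ and $\mathcal{G}^{\mathfrak{r}}(Y,Y)$ in the $(A,B)$-trivialization, together with the sign conventions that make \emph{causal} coincide with the absence of focal points. Once Proposition~\ref{prop:symplectic} supplies these formulas, both directions reduce to the single algebraic identity $\langle A,JB\rangle=0$ plus the sign of $\langle A,B\rangle$, and the remaining care is only to verify that the $s$-independence of the Wronskian-type invariant $\langle(V^\perp)',JV^\perp\rangle$ is exactly what the two metrics compute.
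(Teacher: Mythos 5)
Your proposal is correct in substance, and it takes a genuinely different route from the paper's. The paper proves the theorem as Proposition~\ref{prop:representation} by explicit computation in the holomorphic chart $(\m_1,\m_2)$: it parametrizes the geodesic with endpoints $(\m_1,\m_2)$ in the Hermitian model \eqref{eq:geodesic}, computes $\Lambda=|f_s\times f_t|^2$ in \eqref{eq:regularity} (which is exactly your identity $|V^{\perp}|^2=e^{2t}|A|^2+e^{-2t}|B|^2+2\langle A,B\rangle$, with the cross term identified as $-\tfrac12\mathcal{G}^{\mathfrak{r}}(\alpha',\alpha')$), then computes the unit normal and $K_{\rm ext}$ directly, and for the converse exhibits the singular point $t=(\log|\m_1'|-\log|\m_2'|)/2$ when $\mathcal{G}^{\mathfrak{r}}(\alpha',\alpha')>0$ --- precisely your focal point $e^{2t}=-\lambda$. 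Your Jacobi-field trivialization replaces all of this with invariant algebra, and the normalizations you flag as the main obstacle do check out, though they come from Proposition~\ref{prop:BG} rather than Proposition~\ref{prop:symplectic} as you cite: writing $\xi=A+B$, $\eta=A-B$ in the paper's frame $h_{\xi}+v_{\eta}$ turns $\mathcal{G}^{\mathfrak{r}}(X,X)=-4\left(|\xi|^2-|\eta|^2\right)$ and $\mathcal{G}^{\mathfrak{i}}(X,X)=-8\Im(\xi\bar{\eta})$ into $\mathcal{G}^{\mathfrak{r}}=-16\langle A,B\rangle$ and $\mathcal{G}^{\mathfrak{i}}=16\langle A,JB\rangle$, confirming your signs. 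What each approach buys: the paper's computation yields the explicit representation formula \eqref{eq:representation}, which is reused later (the ideal-cone parametrization \eqref{eq:repre-IC}, the examples, the mean-curvature computation), whereas your chart-free argument avoids the isometry needed to move $\alpha$ into $\mathcal{U}$ and makes the structure transparent: developability is the vanishing of the $t$-independent Wronskian $\langle (V^{\perp})',JV^{\perp}\rangle=2\langle A,JB\rangle$, and your resulting $K_{\rm ext}=-4\langle A,JB\rangle^2/|V^{\perp}|^4$ is manifestly nonpositive and reparametrization-invariant --- a useful cross-check, since the coordinate identity \eqref{eq:Kext} as printed is linear in $\mathcal{G}^{\mathfrak{i}}(\alpha',\alpha')$ and not homogeneous under rescaling of $s$ (only the vanishing criterion extracted from it is used, and that agrees with yours). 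Three small repairs: Proposition~\ref{prop:Portnoy} is not needed in your converse (a developable surface is ruled by definition here; Portnoy's theorem enters elsewhere, to show complete extrinsically flat surfaces are developable); $P$ acts on your trivialization as $(A,B)\mapsto(A,-B)$, with eigenspaces the $e^{\pm t}$ summands, not as the swap $A\leftrightarrow B$ (the swap is its action on $(\xi,\eta)$); and in the forward direction your reduction $B=\lambda A$ presumes $A\neq0$, so handle $A=0$ or $B=0$ separately (regularity of $\alpha$ then forces the other to be nonzero and $V^{\perp}$ is again zero-free) --- this is exactly the case analysis the paper performs after \eqref{eq:regularity}, and it is subsumed by the one-line positivity of $|V^{\perp}|^2$ above when $\langle A,B\rangle\ge0$.
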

\noindent
Here, a regular curve in a pseudo-Riemannian manifold 
is called {\it null\/} (resp.\ {\it causal\/}) if every tangent vector 
gives null (resp.\ timelike or null) direction.
In Section~\ref{sec:exponential}, 
we shall investigate curves in $\LH$ which are null with respect to 
both $\mathcal{G}^{\mathfrak{r}}$ and $\mathcal{G}^{\mathfrak{i}}$.
Such curves generate cones whose vertices are on the ideal boundary, 
which we call {\it ideal cones\/} (Proposition \ref{prop:vertex}).
On the other hand,
on each asymptotic curve $\gamma$ on a complete developable surface,
the mean curvature is proportional to $e^{\pm t}$ or $1/\cosh t$,
where $t$ denotes the arc length parameter of $\gamma$ (Lemma \ref{lem:Massey}).
Based on this fact, a complete developable surface is 
said to be {\it of exponential type\/},
if the mean curvature is proportional to $e^{\pm t}$ on each asymptotic curve
in the non umbilic point set (see Definition \ref{def:exptype}).
Then we have the following
\begin{introtheorem}\label{thm:exponential}
  A real-analytic developable surface of exponential type is an ideal cone.
\end{introtheorem}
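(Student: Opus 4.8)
The plan is to prove the contrapositive-free statement directly, by showing that the curve in $\LH$ generating the surface is null not merely with respect to $\mathcal{G}^{\mathfrak{i}}$ but also with respect to $\mathcal{G}^{\mathfrak{r}}$; once both nullities hold, Proposition \ref{prop:vertex} immediately identifies the surface as an ideal cone. First I would record what is already available. A developable surface of exponential type is complete, and by the converse part of Theorem \ref{thm:null} (through the representation formula of Proposition \ref{prop:representation}) it is generated by a curve $\xi\colon I\to\LH$ which is null with respect to $\mathcal{G}^{\mathfrak{i}}$ and causal with respect to $\mathcal{G}^{\mathfrak{r}}$. Thus $\mathcal{G}^{\mathfrak{i}}(\xi',\xi')\equiv 0$ while $g(s):=\mathcal{G}^{\mathfrak{r}}(\xi'(s),\xi'(s))\le 0$ on $I$, and the entire task reduces to upgrading this inequality to the equality $g\equiv 0$.

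The decisive step is to interpret the three cases of Lemma \ref{lem:Massey} in terms of the sign of $g$. Along a ruling the mean curvature is governed by a Jacobi field $J$ on that geodesic via $H\propto 1/J$; since the surface has intrinsic curvature $-1$, one has $J=c_1e^{t}+c_2e^{-t}$, and completeness of the ruling forces $J$ to be zero-free, i.e.\ $c_1c_2\ge 0$. The profiles $e^{\pm t}$ and $1/\cosh t$ then correspond exactly to $c_1c_2=0$ and $c_1c_2>0$. I would show that this discriminant $c_1c_2$ equals $g(s)$ up to a positive factor, so that the \emph{exponential} profile $e^{\pm t}$ on a ruling is equivalent to $g(s)=0$, whereas the $1/\cosh t$ profile is equivalent to $g(s)<0$; note that this is consistent with the causal (timelike-or-null) character already guaranteed by Theorem \ref{thm:null}. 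Establishing this identification — by pushing the representation formula through the second fundamental form and matching the result against the explicit solution of the mean-curvature ODE underlying Lemma \ref{lem:Massey} — is the main obstacle of the argument; everything before and after it is bookkeeping.

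With the identification in hand the conclusion is short. The non-umbilic set $W$ is where $H\neq 0$, and since along each ruling $H$ is either identically zero or (being of type $e^{\pm t}$ or $1/\cosh t$) nowhere zero, $W$ is a union of complete rulings corresponding to an open subinterval $U\subset I$. By hypothesis the surface is of exponential type, so on every ruling meeting $W$ the mean curvature has the profile $e^{\pm t}$, whence $g(s)=0$ for all $s\in U$. Finally I would invoke real-analyticity: because the surface is real-analytic, $\xi$ and therefore $g$ are real-analytic on the connected interval $I$, so the vanishing of $g$ on the nonempty open set $U$ forces $g\equiv 0$ on all of $I$ by the identity theorem. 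Hence $\xi$ is null with respect to both $\mathcal{G}^{\mathfrak{r}}$ and $\mathcal{G}^{\mathfrak{i}}$, and Proposition \ref{prop:vertex} yields that the surface is an ideal cone. The degenerate case $U=\emptyset$, in which the surface is totally geodesic and the exponential-type condition is vacuous, would be disposed of separately.
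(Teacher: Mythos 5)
Your route is genuinely different from the paper's, and in outline it works. The paper never returns to $\LH$ for this theorem: it deduces Theorem \ref{thm:exponential} from Proposition \ref{prop:exp-IC} (exponential type plus umbilic set with empty interior implies ideal cone), proved by an extrinsic computation in $\H^3$ --- a global adapted parametrization $f=c(s)\cosh t+v(s)\sinh t$ with $I\!I=e^{t}\delta(s)g_{11}\,ds^2$ (Claim 1), a Frenet-frame analysis using the Codazzi equation to show the base curve is a horocycle and that $\kappa=\sqrt{1+\delta^2}$, $\tau=\delta'/(1+\delta^2)$ (Claim 2), and finally the verification that the endpoint function $c+(\vect{n}+\delta\vect{b})/\kappa$ is constant in $\L^4$, so all rulings are mutually asymptotic (Claim 3). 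You instead reduce everything to the sign of $g(s)=\mathcal{G}^{\mathfrak{r}}(\xi',\xi')$ and invoke Proposition \ref{prop:vertex}, which unifies Theorem \ref{thm:exponential} with the Theorem \ref{thm:null} machinery and is shorter. Your ``main obstacle'' is in fact essentially contained in the paper's formula \eqref{eq:regularity}: when $\mathcal{G}^{\mathfrak{i}}(\xi',\xi')=0$ one has $\det g=\Lambda(s,t)=Ae^{2t}+Be^{-2t}-\tfrac12 g(s)$ with $A,B\ge 0$, while in the Massey coordinates of Lemma \ref{lem:Massey} along the same ruling $\det g=(du/ds)^2\left(c_1e^{t+\phi}+c_2e^{-t-\phi}\right)^2$; matching the coefficients of $e^{2t}$, $e^{-2t}$, $1$ gives $g(s)=-4(du/ds)^2\,c_1c_2$, and since $1/H\propto c_1e^{t}+c_2e^{-t}$, the profile $e^{\pm t}$ is exactly $c_1c_2=0$, i.e.\ $g(s)=0$, as you claimed. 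So that identification is sound and checkable.

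The one genuine soft spot is your final continuation step: you assert that real-analyticity of the surface makes $\xi$, hence $g$, real-analytic on all of $I$, and then apply the identity theorem starting from a nonempty open set. Real-analyticity of the ruling family across umbilic rulings is not automatic --- the asymptotic line field is determined by $I\!I$ only off the umbilic set and extends across it only by the limiting argument in the proof of Proposition \ref{prop:Portnoy} --- so as written this needs an extra argument. Fortunately you do not need it: since $H$ is real-analytic and the surface is not totally umbilic (this is built into Definition \ref{def:exptype}, so your degenerate case $U=\emptyset$ cannot occur), the umbilic set $\{H=0\}$ has empty interior; it is a union of whole rulings, so its parameter set $I\setminus U$ has empty interior and $U$ is open and \emph{dense} in $I$ (note $U$ is an open subset, not in general a subinterval). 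Then mere continuity of $g$ --- i.e.\ smoothness of the global ruled parametrization, which is exactly the paper's Claim 1 under this no-interior hypothesis --- upgrades $g=0$ on $U$ to $g\equiv 0$ on $I$, and Proposition \ref{prop:vertex} finishes. This is also precisely where real-analyticity enters the paper's argument, consistent with Example \ref{ex:NRA}, where the umbilic set has nonempty interior and the conclusion fails.
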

\noindent
The assumption of ``real-analyticity'' cannot be removed (see Example \ref{ex:NRA}).

As mentioned before,
complete flat surfaces in the Euclidean 3-space $\R^3$ are only cylinders.
However, if we admit {\it singularities}, 
there are a lot of interesting examples.
Murata and Umehara \cite{MurataUmehara} investigated 
the global geometric properties of 
a class of flat surfaces with singularities in $\R^3$, so-called {\it flat fronts\/}.
On the other hand, there is another generalization of 
ruled (resp.\ developable) surfaces in $\R^3$:
{\it horocyclic\/} (resp.\ {\it horospherical flat horocyclic\/}) surfaces in $\H^3$ 
(for more details, see \cite{IzumiyaSajiTakahashi,TakizawaTsukada}).

\begin{acknowledgements}
     Thanks are due to Kotaro Yamada, author's advisor, 
     for many helpful comments and discussions. 
     The author also would like to thank Masaaki Umehara 
     for his intensive lecture on surfaces with singularities
     at Kumamoto University on June, 2009,
     which made the author interested in current subjects.
     He also thanks a lot to Jun-ichi Inoguchi for valuable discussions 
     and constant encouragement.
     Finally, the author expresses gratitude to 
     Masahiko Kanai, Soji Kaneyuki and Yu Kawakami for their helpful comments.
\end{acknowledgements}
\endgroup

%%%%%%%%%%%%%%%%%%%%%%%%%%%%%%%%%%
\section{Preliminaries}
\label{sec:prelim}

\subsection{Hyperbolic $3$-space}
\label{sec:H3}
\hspace{2mm}

We denote by $\L^4$ the Lorentz-Minkowski $4$-space with the Lorentz metric 
\[
  \inner{ {}^t (x_{0}, x_{1}, x_{2}, x_{3}) }{ {}^t (y_{0}, y_{1}, y_{2}, y_{3}) } 
  = -x_{0}y_{0}+x_{1}y_{1}+x_{2}y_{2}+x_{3}y_{3},
\]
where ${}^t$ denotes the transposition. 
Then the hyperbolic $3$-space is given by
\begin{equation}\label{eq:LM-model}
  \H^{3}=\left\{\left. \vect{x}={}^t (x_{0}, x_{1}, x_{2}, x_{3})\in \L^{4} \,\right|\, 
  \inner{\vect{x}}{\vect{x}}=-1, ~x_{0}>0 \right\}
\end{equation}
with the induced metric from $\L^{4}$, 
which is a complete simply connected and connected Riemannian $3$-manifold 
with constant sectional curvature $-1$. 
We identify $\L^4$ with the set of $2\times 2$ Hermitian matrices 
${\rm Herm}(2)=\{X^{\ast}=X\}~(X^{\ast}:= {}^t\bar{X})$ by
\[
  \L^4 \ni {}^t(x_0,x_1,x_2,x_3) \longleftrightarrow \left(
  \begin{array}{cc}
    x_0+x_3 & x_1+i x_2 \\
    x_1-i x_2 & x_0-x_3 
  \end{array}
  \right) \in \Herm(2)
\]
with the metric 
\[
  \inner{X}{Y}=-\frac{1}{2}\trace(X\tilde{Y}),\qquad \inner{X}{X}=-\det X, 
\]
where $\tilde{Y}$ is the cofactor matrix of $Y$.
Under this identification, the hyperbolic $3$-space $\H^3$ is represented as
\begin{equation}\label{eq:Herm-model}
  \H^3=\left\{\left. p \in \Herm(2) \,\right|\, \det p=1,\, \trace p>0 \right\}.
\end{equation}
We call this realization of $\H^3$ the {\it Hermitian model}.
We fix the basis $\{ \sigma_0, \sigma_1, \sigma_2, \sigma_3 \}$ of $\Herm(2)$ as 
\begin{equation}
\label{eq:basis}
  \sigma_0 = {\rm id}, \quad
  \sigma_1= \left(\begin{array}{cc}
  0 & 1 \\
  1 & 0 
  \end{array}
  \right),
  \quad
  \sigma_2= \left(\begin{array}{cc}
  0 & -i \\
  i & 0 
  \end{array}
  \right),
  \quad
  \sigma_3= \left(\begin{array}{cc}
  1 & 0 \\
  0 & -1 
  \end{array}
  \right). 
\end{equation}
In the Hermitian model, the cross product at $T_p\H^3$ is given by
\begin{equation}\label{eq:cross}
  X \times Y= \frac{i}{2}(Xp^{-1}Y-Yp^{-1}X), 
\end{equation}
for $X,Y\in T_p\H^3$ (cf. \cite[(3 - 1)]{KRSUY}).
The special linear group $\SL(2,\C)$ acts isometrically and transitively on $\H^3$ by
\begin{equation}\label{eq:isom}
  \H^3 \ni p \longmapsto apa^{\ast} \in \H^3,  
\end{equation}
where $a \in \SL(2,\C)$.
The isotropy subgroup of $\SL(2,\C)$ at $\sigma_0$ is 
the special unitary group $\SU(2)$. Therefore we can identify 
\begin{eqnarray*}
  \H^3 
  = \SL(2,\C)/\SU(2)
  = \left\{\left. aa^{\ast} \,\right|\, a \in \SL(2,\C) \right\}
\end{eqnarray*}
in the usual way. 
Moreover, the identity component of the isometry group ${\rm Isom}_0(\H^3)$ 
is isomorphic to $\PSL(2,\C):= \SL(2,\C)/\{\pm \text{id} \}$.

%%%%%%%%%%%%%%%%%%%%%%%%%%%%%%%%%%
\subsection{The unit tangent bundle}
\label{sec:UH3}
\hspace{2mm}

We denote by $U\H^3$ the unit tangent bundle of $\H^3$, 
which can be identified with
\[
  U\H^3=\left\{ (p,v) \in \Herm(2) \times \Herm(2) 
  \,\left|\, \begin{array}{cc} \det p=-\det v=1,\\ \trace p>0,
  ~ \inner{p}{v}=0 \end{array} \right. \right\}.
\]
The projection 
\begin{equation}\label{eq:pi}
  \pi : U\H^3 \ni (p,v) \longmapsto p \in \H^3
\end{equation}
gives a sphere bundle.
The tangent space at $(p,v) \in U\H^3$ can be written by
\begin{equation}\label{eq:tangUH3}
  T_{(p,v)}U\H^3=\left\{ (X,V) \in \Herm(2)\times\Herm(2) \,\left|\, 
  \begin{array}{cc} \inner{p}{X}=\inner{v}{V}=0,\\
  \inner{p}{V}=-\inner{X}{v} \end{array} \right.\right\}.
\end{equation}
The {\it canonical contact form} $\Theta$ on $U\H^3$ is given by
\begin{equation}\label{eq:cano-cont}
  \Theta_{(p,v)}(X,V)=\inner{X}{v}=-\inner{p}{V},\qquad (X,V)\in T_{(p,v)}U\H^3.
\end{equation}
The isometric action of $\SL(2,\C)$ on $\H^3$ as in \eqref{eq:isom} induces 
a transitive action on $U\H^3$ as
\[
  U\H^3 \ni (p,v) \longmapsto (apa^{\ast},ava^{\ast}) \in U\H^3,  
\]
where $a \in \SL(2,\C)$.
The isotropy subgroup of $\SL(2,\C)$ at $(\sigma_0,\sigma_3) \in U\H^3$ is
\[
  \left\{\left. \left( \begin{array}{cc} e^{i \theta} & 0\\ 0 & e^{-i \theta} \end{array} \right) 
  \,\right|\, \theta \in \R/2\pi\Z \right\}
\]
which is isomorphic to the unitary group $\U(1)$,
where $\sigma_0$ and $\sigma_3$ are as in \eqref{eq:basis}.
Hence we have
\begin{equation}\label{eq:unitan}
  U\H^3
  =\SL(2,\C)/\U(1)
  = \left\{\left. (aa^{\ast},a\sigma_3 a^{\ast}) \,\right|\, a \in {\rm SL}(2,\C) \right\}.
\end{equation}

%%%%%%%%%%%%%%%%%%%%%%%%%%%%%%%%%%
\subsection{The space of oriented geodesics}
\label{sec:LH3}
\hspace{2mm}

The space $\LH$ of oriented geodesics in $\H^3$ is
defined as the set of equivalence classes of unit speed geodesics in $\H^3$.
Here, two unit speed geodesics $\gamma_1(t),\,\gamma_2(t)$ 
in $\H^3$ are said to be {\it equivalent\/}
if there exists $t_0 \in \R$ such that $\gamma_1(t+t_0)=\gamma_2(t)$.
We denote by $[\gamma]$ the equivalence class represented by $\gamma(t)$. 
The set $\LH$ has a structure of a smooth $4$-manifold.
Moreover, if we denote by $\SO^+(1,1)$ the restricted Lorentz group, the projection
\begin{equation}\label{eq:pihat}
  \hat{\pi}: U\H^3 \ni (p,v) \longmapsto [\gamma_{p,v}] \in \LH
\end{equation}
defines an $\SO^+(1,1)$-bundle,
where $\gamma_{p,v}$ is the geodesic starting at $p \in \H^3$ 
with the initial velocity $v\in T_p\H^3$. 

%%%%%%%%%%%%%%%%%%%%%%%%%%%%%%%%%%
\subsubsection{The natural complex structure and a holomorphic coordinate system}
\hspace{2mm}

Hitchin \cite{Hitchin} constructed the natural complex structure $J$ on 
$\LH$ ({\it minitwistor construction}).
Here, we introduce a local holomorphic coordinate system 
$(\m_1,\m_2)$ of the complex surface $(\LH,J)$ \cite{GG}.
We denote by $\partial \H^3$ the ideal boundary of $\H^3$,
that is, the set of asymptotic classes of oriented geodesics.
For a geodesic $\gamma=\gamma(t)$, set $\gamma_+$, $\gamma_- \in \partial \H^3$ as
\begin{equation}\label{eq:endpt}
  \gamma_+:=\lim_{t\rightarrow\infty}\gamma(t),\qquad
  \gamma_-:=\lim_{t\rightarrow-\infty}\gamma(t).
\end{equation}
Evidently, $\gamma_+$ and $\gamma_-$
are independent of choice of a representative of $[\gamma]$,
and $(\gamma_+,\gamma_-) \in (\partial \H^3\times\partial \H^3) \setminus \Delta$ holds,
where $\Delta$ is the diagonal set of $\partial \H^3\times\partial \H^3$.
Conversely, for any distinct points $a,~b \in \partial \H^3$,
there exists a unique equivalence class $[\gamma] \in \LH$ 
such that $\gamma_+=a$, $\gamma_-=b$.
Thus, we can identify 
$\LH = (\partial \H^3\times\partial \H^3) \setminus \Delta$ as a set.
Now we recall the {\it upper-half space model} of $\H^3$:
\begin{equation}\label{eq:upper}
  \R^3_{+}=\left( \left\{\left. (w,r)\in \C \times \R \,\right|\, r>0 \right\},\, 
  \frac{ dw d\bar{w}+dr^2}{r^2} \right).
\end{equation}
A map
\begin{equation}\label{eq:isometry} 
  \Psi : \H^3 \ni \left(
  \begin{array}{cc}
    x_0+x_3 & x_1+ix_2\\ 
    x_1-ix_2 & x_0-x_3 
  \end{array}\right)  
  \longmapsto \left( \frac{x_1+ix_2}{x_0-x_3}, \frac{1}{x_0-x_3}\right) \in\R^3_+ 
\end{equation}
gives an isometry.
The geodesics of $\R^3_+$ are divided into two types:
straight lines parallel to the $r$-axis and semicircles perpendicular to the $w$-plane.

Identifying $\partial \H^3$ with the Riemann sphere $\hat{\C}:=\C\cup\{\infty\}$, 
we may consider $\gamma_{+}$ and $\gamma_{-}$ as points in $\hat{\C}$.
Then we set an open subset $\mathcal{U}$ of $\LH$ as
\begin{equation}\label{eq:nbd}
  \mathcal{U}:=\left\{\left. [\gamma] \in \LH \,\right|\, 
  \gamma_+ \neq 0,\, \gamma_- \neq \infty \right\},
\end{equation}
and complex numbers $\m_1$, $\m_2$ as
\begin{equation}\label{eq:holcoord}
  \m_1:=-\gamma_-,\qquad
  \m_2:=\frac{1}{\bar{\gamma}_+}
\end{equation}
for $[\gamma] \in \mathcal{U}$ (see Figure \ref{fig:upper}).
Georgiou and Guilfoyle \cite{GG} proved that 
$(\mathcal{U}; (\m_1,\m_2))$ defines a local holomorphic coordinate system of $\LH$ 
compatible to the complex structure $J$, 
and the map $[\gamma] \longmapsto (\m_1,\m_2)$ extends to a biholomorphic map 
\[
  (\LH,J) \bihol (\hat{\C}\times\hat{\C})\setminus\hat{\Delta}, 
\]
where $\hat{\Delta}=\{ (\m_1, \m_2) \in \C^2 \,|\, 
1+\m_1\bar{\m}_2=0\} \cup \{ (0,\infty),\, (\infty,0) \}$,
so-called the reflected diagonal.

\begin{figure}[htb]
 \begin{tabular}{cc}
  \resizebox{11cm}{!}{\includegraphics{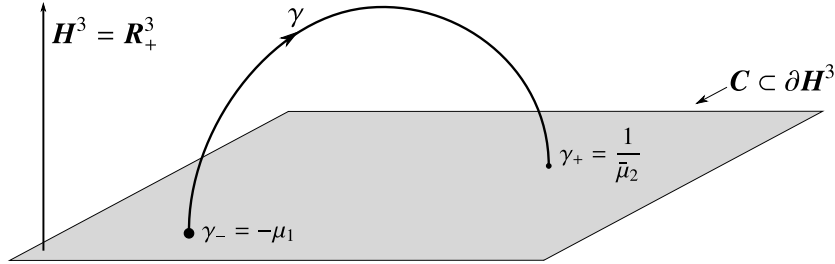}}
 \end{tabular}
 \caption{The holomorphic coordinate system $(\m_1,\m_2).$}
 \label{fig:upper}
\end{figure}

\begin{remark}[As a complex line bundle]
Over the complex projective line $\P^1$, the map 
\[
  \Pi : \LH \ni [\gamma] \longmapsto \gamma_- \in \P^1
\]
gives a complex line bundle. 
Each fiber of $\gamma_-$ is $\P^1\setminus\{\gamma_-\}$ which is identified with $\C$.
It is easy to see that $\Pi$ is a trivial bundle $\mathcal{O}_{\P^1}(0)$.
On the other hand, the space $\LR$ of oriented geodesics in the Euclidean $3$-space 
is biholomorphic to the holomorphic tangent bundle $T\P^1$ of $\P^1$ \cite{GK}.
That is $\LR \cong \mathcal{O}_{\P^1}(2)$.
This implies that $\LH$ is not isomorphic to $\LR$ as a line bundle over $\P^1$.
\end{remark}

%%%%%%%%%%%%%%%%%%%%%%%%%%%%%%%%%%
\subsubsection{The invariant metrics, K\"ahler and para-K\"ahler structures}
\hspace{2mm}

The isometric action of $\SL(2,\C)$ on $\H^3$ as in \eqref{eq:isom} induces 
an action on $\partial\H^3=\hat{\C}$ as
\[
  \hat{\C} \ni z \longmapsto \frac{a_{11} z+a_{12} }{ a_{21} z+a_{22} } \in \hat{\C},
\]
where $a= (a_{ij}) \in \SL(2,\C)$.
This action induces a holomorphic and transitive action of 
${\rm Isom}_0(\H^3)=\PSL(2,\C)$ on 
$\LH=(\hat{\C}\times\hat{\C})\setminus\hat{\Delta}$ as
\begin{equation}\label{eq:hol-act}
  (\hat{\C}\times\hat{\C})\setminus\hat{\Delta} \ni (\m_1,\m_2) \longmapsto 
  \left( \frac{- a_{11} \m_1+a_{12} }{ \hphantom{-}a_{21} \m_1-a_{22} }, 
  \frac{\bar{a}_{22} \m_2+\bar{a}_{21} }{\bar{a}_{12} \m_2+\bar{a}_{11} } \right)
  \in (\hat{\C}\times\hat{\C})\setminus\hat{\Delta},
\end{equation}
for $a= (a_{ij}) \in \PSL(2,\C)$.
If we set a $\C$-valued symmetric $2$-tensor on $\LH$ as
\begin{equation}\label{eq:Killing-coord} 
  \mathcal{G}:= \frac{4\,d\mu_1d\bar{\mu}_2}{ (1+ \mu_1\bar{\mu}_2)^2 },
\end{equation}
then it holds that
\begin{equation}\label{eq:invariant-metric}
  \mathcal{G}_{\theta} 
  := \Re \left( e^{-i\theta} \mathcal{G} \right) 
  =  (\cos \theta)\, \mathcal{G}^{\mathfrak{r}} + (\sin \theta)\, \mathcal{G}^{\mathfrak{i}}
\end{equation}
defines a pseudo-Riemannian metric on $\LH$ 
of neutral signature for each $\theta \in \R/ 2 \pi \Z$,
which is invariant under the action given in \eqref{eq:hol-act},
where $\mathcal{G}^{\mathfrak{r}}$ and $\mathcal{G}^{\mathfrak{i}}$ are
the neutral metrics given by the real and imaginary part of $\mathcal{G}$, respectively,
\begin{equation}\label{eq:metrics}
  \mathcal{G}^{\mathfrak{r}}
  :=\dfrac{1}{2} \left\{
  \dfrac{ 4\,d\mu_1 d\bar{\mu}_2}{(1+\mu_1 \bar{\mu}_2)^2}+
  \dfrac{ 4\,d\mu_2 d\bar{\mu}_1}{(1+\mu_2 \bar{\mu}_1)^2} \right\},\quad
  \mathcal{G}^{\mathfrak{i}}
  :=\dfrac{1}{2i} \left\{ 
  \dfrac{ 4\,d\mu_1 d\bar{\mu}_2}{ (1+ \mu_1\bar{\mu}_2)^2 }-
  \dfrac{ 4\,d\mu_2 d\bar{\mu}_1}{ (1+ \mu_2\bar{\mu}_1)^2} \right\}.
\end{equation}
Conversely, Salvai \cite{Salvai} proved that 
any pseudo-Riemannian metric on $\LH$ 
invariant under the action as in \eqref{eq:hol-act}
is a constant multiple of $\mathcal{G}_{\theta}$ for some $\theta \in \R/ 2 \pi \Z$.
Thus we call $\mathcal{G}_{\theta}$ ($\theta \in \R/ 2 \pi \Z$) {\it invariant metrics\/}.
Any invariant metric $\mathcal{G}_{\theta}$ is K\"ahler 
with respect to the natural complex structure
\begin{equation}\label{eq:cpx}
  J \left( \frac{\partial}{\partial \m_1} \right) = i\frac{\partial}{\partial \m_1}, \qquad 
  J \left( \frac{\partial}{\partial \m_2} \right) = i\frac{\partial}{\partial \m_2}. 
\end{equation}
On the other hand, a involutive $(1,1)$-tensor $P$ on $\LH$ given as
\begin{equation}\label{eq:P-cpx}
  P \left( \frac{\partial}{\partial \m_1} \right) = -\frac{\partial}{\partial \m_1}, \qquad 
  P \left( \frac{\partial}{\partial \m_2} \right) = \frac{\partial}{\partial \m_2}
\end{equation}
is a {\it para-K\"ahler structure} on $\LH$ for any $\mathcal{G}_{\theta}$.
That is, for $[\gamma]$ in $\LH$, we have
\[
  \dim_{\R}\{ X \in T_{[\gamma]}\LH \,|\, P(X)=\pm X \}=2,\quad
  \mathcal{G}_{\theta}(P \cdot ,P \cdot )=-\mathcal{G}_{\theta}(\cdot, \cdot),\quad
  \nabla^{L} P=0,
\] 
where $\nabla^{L}$ is the common 
Levi-Civita connection of $(\LH, \mathcal{G}_{\theta})$ for all $\theta$.

%%%%%%%%%%%%%%%%%%%%%%%%%%%%%%%%%%
\section{The Invariant Metrics and the Canonical Symplectic Form}
\label{sec:geomstr}

In this section, we shall characterize two neutral metrics 
$\mathcal{G}^{\mathfrak{r}}$ and $\mathcal{G}^{\mathfrak{i}}$ given 
in \eqref{eq:metrics}: 
both the para-K\"ahler form of $(\LH, \mathcal{G}^{\mathfrak{r}},P)$ and 
the K\"ahler form of $(\LH, \mathcal{G}^{\mathfrak{i}},J)$ coincide with 
the twice of the canonical symplectic form on $\LH$ up to sign 
(Proposition \ref{prop:symplectic}).
Moreover, identifying $\LH=\SL(2,\C)/\GL(1,\C)$, 
we prove that $\mathcal{G}$ in \eqref{eq:Killing-coord} coincides with 
the $\C$-valued symmetric $2$-tensor induced from the Killing form 
of the Lie algebra $\mathfrak{sl}(2,\C)$
of $\SL(2,\C)$ up to real constant multiplication (Proposition \ref{prop:BG}).

%%%%%%%%%%%%%%%%%%%%%%%%%%%%%%%%%%
\subsubsection*{The canonical symplectic form}
\hspace{2mm}

Let $\omega$ be the {\it canonical symplectic form} on $\LH$, 
that is, $\omega$ is the symplectic form on $\LH$ satisfying
\begin{equation}\label{eq:cano-form}
  \hat{\pi}^* \omega = d \Theta,
\end{equation}
where $\Theta$ is the canonical contact form given in \eqref{eq:cano-cont} 
on the unit tangent bundle $U\H^3$,
and $\hat{\pi} : U\H^3 \rightarrow \LH$ is the projection as in \eqref{eq:pihat}.

We denote by $\omega_{J}$ the K\"ahler form of $(\LH, \mathcal{G}^{\mathfrak{i}}, J)$,
and by $\omega_{P}$ the para-K\"ahler form of $(\LH, \mathcal{G}^{\mathfrak{r}}, P)$,
that is,
\begin{equation}\label{eq:K-pK}
  \omega_{J}=\mathcal{G}^{\mathfrak{i}}(\cdot, J\cdot), \qquad 
  \omega_{P}=\mathcal{G}^{\mathfrak{r}}(\cdot, P\cdot). 
\end{equation}
Then we have the following

\begin{proposition}\label{prop:symplectic}
\[
  \omega_{J}=-\omega_{P}=2\omega.
\]
\end{proposition}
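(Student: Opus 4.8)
The first identity $\omega_{J}=-\omega_{P}$ is purely tensorial, so I would verify it in the holomorphic chart $(\mu_1,\mu_2)$ of \eqref{eq:holcoord}. By \eqref{eq:metrics} the only nonzero pairings of $\mathcal{G}^{\mathfrak{r}}$ and $\mathcal{G}^{\mathfrak{i}}$ are those of $\partial/\partial\mu_1$ with $\partial/\partial\bar\mu_2$ (coefficient a multiple of $(1+\mu_1\bar\mu_2)^{-2}$) and of $\partial/\partial\mu_2$ with $\partial/\partial\bar\mu_1$ (its conjugate). Since, by \eqref{eq:cpx} and \eqref{eq:P-cpx}, $J$ multiplies each $\partial/\partial\mu_j$ by $i$ while $P$ multiplies them by $\mp1$, both $\omega_{J}=\mathcal{G}^{\mathfrak{i}}(\cdot,J\cdot)$ and $\omega_{P}=\mathcal{G}^{\mathfrak{r}}(\cdot,P\cdot)$ collapse to explicit multiples of $d\mu_1\wedge d\bar\mu_2$ and $d\mu_2\wedge d\bar\mu_1$. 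Reading off the two coefficients gives $\omega_{J}=-\omega_{P}$ at once, and exhibits $\omega_{J}$ as a real multiple of the $2$-form $(1+\mu_1\bar\mu_2)^{-2}\,d\mu_1\wedge d\bar\mu_2-(1+\mu_2\bar\mu_1)^{-2}\,d\mu_2\wedge d\bar\mu_1$.

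It then remains to identify this with $2\omega$, for which I would compute the canonical symplectic form in the same chart through the homogeneous realization \eqref{eq:unitan} of $U\H^3=\SL(2,\C)/\U(1)$, namely $a\mapsto(aa^{\ast},a\sigma_3 a^{\ast})$. Writing the Maurer--Cartan form $a^{-1}\,da$ with diagonal entries $\pm\alpha$ and off-diagonal entries $\beta$ (upper) and $\gamma$ (lower), and using the $\SL(2,\C)$-invariance of $\inner{\cdot}{\cdot}$ together with \eqref{eq:cano-cont}, I expect the pullback of $\Theta$ along this section to reduce to $\inner{a^{-1}da+(a^{-1}da)^{\ast}}{\sigma_3}=\alpha+\bar\alpha$. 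The Maurer--Cartan equation gives $d\alpha=-\beta\wedge\gamma$, whence
\[
  d\Theta=-\beta\wedge\gamma-\bar\beta\wedge\bar\gamma .
\]
The geodesic through $(aa^{\ast},a\sigma_3 a^{\ast})$ has endpoints $\gamma_+=a_{11}/a_{21}$ and $\gamma_-=a_{12}/a_{22}$, so \eqref{eq:holcoord} reads $\mu_1=-a_{12}/a_{22}$, $\mu_2=\bar a_{21}/\bar a_{11}$; differentiating and using $\det a=1$ yields $\beta=-a_{22}^{2}\,d\mu_1$, $\gamma=a_{11}^{2}\,d\bar\mu_2$ and $a_{11}a_{22}=(1+\mu_1\bar\mu_2)^{-1}$. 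Substituting turns $\hat{\pi}^{\ast}\omega=d\Theta$ (cf.\ \eqref{eq:cano-form}) into
\[
  \hat{\pi}^{\ast}\omega=\frac{d\mu_1\wedge d\bar\mu_2}{(1+\mu_1\bar\mu_2)^{2}}-\frac{d\mu_2\wedge d\bar\mu_1}{(1+\mu_2\bar\mu_1)^{2}},
\]
i.e.\ the very same $2$-form as above, up to a real factor.

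Comparing the two coordinate expressions then yields $\omega_{J}=-\omega_{P}=2\omega$. The main obstacle is the second step: correctly passing from the abstract contact form \eqref{eq:cano-cont} to Maurer--Cartan data, and then translating the group differentials $\beta,\gamma$ into $d\mu_1,d\bar\mu_2$ through the endpoint dictionary \eqref{eq:holcoord}. In particular the numerical factor $2$ and the overall sign are the delicate points: the $2$ comes from the $4$'s in \eqref{eq:metrics} together with the symmetric-product convention, while the sign is fixed by the orientation of $v$ relative to the labeling $\gamma_{\pm}$ and by \eqref{eq:cano-form}. As a consistency check I would observe that all three forms are invariant under the $\PSL(2,\C)$-action \eqref{eq:hol-act} (because $\Theta$ is invariant and $J$, $P$, $\mathcal{G}$ are), and that the isotropy representation at the base geodesic $(\mu_1,\mu_2)=(0,0)$, where the stabilizer $\cong\GL(1,\C)$ acts by $(\mu_1,\mu_2)\mapsto(\lambda\mu_1,\bar\lambda^{-1}\mu_2)$, forces the space of invariant real $2$-forms to be exactly two-dimensional; hence it suffices to match coefficients at this single geodesic.
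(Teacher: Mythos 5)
Your route is genuinely different from the paper's. The paper never expresses $\omega$ in the holomorphic chart: it works at the single point $[\gamma_0]$ of the homogeneous model $\LH=\SL(2,\C)/\GL(1,\C)$, uses Proposition \ref{prop:BG} to get $\omega_J$ and $\omega_P$ on vectors $h_{\xi}+v_{\eta}$ from the Killing form, evaluates $d\Theta$ on explicit horizontal lifts coming from the parametrization \eqref{eq:parametrization}, and lets invariance do the rest. You instead derive a closed coordinate formula for $\omega$ from the Maurer--Cartan structure equation, which is arguably cleaner and yields $\omega$ globally as a byproduct. Your intermediate steps all check out: the pullback of $\Theta$ along $a\mapsto(aa^{\ast},a\sigma_3a^{\ast})$ is indeed $\alpha+\bar\alpha$, $d\alpha=-\beta\wedge\gamma$, the endpoint dictionary $\gamma_+=a_{11}/a_{21}$, $\gamma_-=a_{12}/a_{22}$ is correct (it reproduces exactly the paper's coordinate expressions for $\mu_1,\mu_2$ along $\psi$), as are $\beta=-a_{22}^{2}d\mu_1$, $\gamma=a_{11}^{2}d\bar\mu_2$ and $a_{11}a_{22}=(1+\mu_1\bar\mu_2)^{-1}$; your first step is a complete proof of $\omega_J=-\omega_P$, and your isotropy-weight argument (the invariant real $2$-forms at $[\gamma_0]$ form exactly a $2$-dimensional space) is sound and mirrors the paper's reduction to one point.

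The gap is at the one place where the proposition actually lives: the normalization ``$=2\omega$'' is asserted, not derived. Run your own comparison to the end with the conventions you implicitly use in step 1, namely $d\mu_1 d\bar\mu_2=\tfrac12(d\mu_1\otimes d\bar\mu_2+d\bar\mu_2\otimes d\mu_1)$, $(\alpha\wedge\beta)(X,Y)=\alpha(X)\beta(Y)-\alpha(Y)\beta(X)$ and $d(f\,dg)=df\wedge dg$: then $\omega_P$ computed from \eqref{eq:metrics} and \eqref{eq:P-cpx} comes out as exactly
\[
  \frac{d\mu_1\wedge d\bar\mu_2}{(1+\mu_1\bar\mu_2)^{2}}-\frac{d\mu_2\wedge d\bar\mu_1}{(1+\mu_2\bar\mu_1)^{2}},
\]
which is \emph{the same} $2$-form as your Maurer--Cartan expression for $d\Theta$; so ``comparing the two coordinate expressions'' literally yields $\omega_J=-\omega_P=-\omega$ --- no factor $2$, and the opposite sign. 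The factor $2$ only appears if one adopts the $\tfrac12$-normalized exterior derivative, which is the convention the paper's proof tacitly employs (visible in its line $2d\Theta(\tilde X,\tilde Y)=\inner{h_{\xi}}{h_{\delta}}-\inner{h_{\beta}}{h_{\eta}}$, with lifts written as $(h_{\xi},h_{\eta})$ rather than the actual pushforwards $(2h_{\xi},2h_{\eta})$ of $h_{\xi}+v_{\eta}$), and the overall sign hinges on which of $\inner{V_1}{X_2}-\inner{V_2}{X_1}$ versus its negative one takes for $d\Theta\bigl((X_1,V_1),(X_2,V_2)\bigr)$ --- precisely the two ``delicate points'' you name and then postpone. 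Since the content of the statement beyond $\omega_J=-\omega_P$ consists exactly of that sign and that factor, deferring them means the proposal does not yet prove the proposition: to finish, you must fix the wedge/$d$ normalization and the contact-form sign convention explicitly and redo the final coefficient comparison, because carried out literally your (otherwise correct) computation lands on a different constant than the one claimed.
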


To prove this, we introduce metrics on $U\H^3$ and $\LH$ induced 
from the Killing form of $\mathfrak{sl}(2,\C)$ 
considering $U\H^3$ and $\LH$ as homogeneous spaces of $\SL(2,\C)$.

%%%%%%%%%%%%%%%%%%%%%%%%%%%%%%%%%%
\subsubsection*{The Killing form of $\mathfrak{sl}(2,\C)$}
\hspace{2mm}

Let $B$ be the half of the Killing form of the Lie algebra 
$\mathfrak{sl}(2,\C)$ of $\SL(2,\C)$, i.e.,
\begin{equation}\label{eq:Killing}
  B(X,Y)=2\trace (XY), \qquad X,Y\in \mathfrak{sl}(2,\C).
\end{equation}
Then we set $B^{\mathfrak{r}}$ and $B^{\mathfrak{i}}$ 
to be the real and imaginary part of $B$, respectively: 
\begin{equation}\label{eq:reim-Killing}
 B^{\mathfrak{r}}:=\Re B,\qquad B^{\mathfrak{i}}:=\Im B.
\end{equation}

\begin{remark}
  The special linear group $\SL(2,\C)$ is the double cover 
  of the restricted Lorentz group $\SO^+(1,3)$.
  The Killing form of the real Lie algebra of $\mathfrak{so}(1,3)$ of $\SO^+(1,3)$ 
  coincides with a constant multiple of $B^{\mathfrak{r}}$.
\end{remark}

%%%%%%%%%%%%%%%%%%%%%%%%%%%%%%%%%%
\subsubsection*{The unit tangent bundle}
\hspace{2mm}

The tangent space of the unit tangent bundle $U\H^3=\SL(2,\C)/\U(1)$ 
as in \eqref{eq:unitan} 
at $(\sigma_0,\sigma_3) \in U\H^3$ is identified with the orthogonal complement of 
the Lie algebra $\mathfrak{u}(1)$ of $\U(1)$ with respect to $B^{\mathfrak{r}}$, that is,
\begin{eqnarray*}
  T_{(\sigma_0,\sigma_3)} U\H^3
  =\mathfrak{u}(1)^{\perp}
  =\left\{\left. i \varepsilon \sigma_3+h_{\xi}+ v_{\eta} \,\right|\, 
  \varepsilon\in \R,\,\xi,\eta \in \C \right\},
\end{eqnarray*}
where $\sigma_0$, $\sigma_3$ are as in \eqref{eq:basis}, 
and $h_{\xi}$, $v_{\eta}$ are defined by
\begin{equation}\label{eq:horver}
  h_{\xi}=
  \left(\begin{array}{cccc}
  0 & \xi \\
  \bar{\xi} & 0
  \end{array}\right),\qquad
  v_{\eta}=
  \left(\begin{array}{cccc}
  0 & -\eta \\
  \bar{\eta} & 0
  \end{array}\right).
\end{equation}
These notations are used since  $h_{\xi}$, $v_{\eta}$ are 
horizontal and vertical tangent vectors of
the sphere bundle $\pi : U\H^3\rightarrow \H^3$ given in \eqref{eq:pi}, respectively.
The restriction of $B^{\mathfrak{r}}$ in \eqref{eq:reim-Killing} 
to $T_{(\sigma_0,\sigma_3)} U\H^3$ can be written by
\begin{equation}\label{eq:metric-on-UH3}
  B^{\mathfrak{r}}(X,X)=4(\varepsilon^2 + |\xi|^2-|\eta|^2), 
\end{equation}
for $X= i \varepsilon \sigma_3 + h_{\xi} + v_{\eta} \in T_{(\sigma_0,\sigma_3)} U\H^3$.
Thus $B^{\mathfrak{r}}$ defines a pseudo-Riemannian metric 
$B_{U}$ on $U\H^3$ of signature $(+,+,+,-,-)$.
Moreover, the projection
\begin{equation}\label{eq:adjust}
  \pi : (U\H^3, {B}_{U}) \longrightarrow (\H^3,\inner{~}{~})
\end{equation}
defined as in \eqref{eq:pi} is a pseudo-Riemannian submersion.

%%%%%%%%%%%%%%%%%%%%%%%%%%%%%%%%%%
\subsubsection*{The space of oriented geodesics}
\hspace{2mm}

Consider the smooth and transitive action of $\SL(2,\C)$ given as
\[
  \LH \ni [\gamma] \longmapsto [a\gamma a^*] \in \LH, 
\]
for $a \in \SL(2,\C)$, where $[a\gamma a^*]$ is 
the equivalence class of the geodesic $a\gamma(t)a^{\ast}$ 
for some representative $\gamma$ of $[\gamma]$. 
Note that this action coincides with the action given in \eqref{eq:hol-act}.
If we denote by $\gamma_{\sigma_0,\sigma_3}$ the geodesic in $\H^3$ starting 
at $\sigma_0$ with initial velocity $\sigma_3$,
then the isotropy subgroup of $\SL(2,\C)$ at 
$[\gamma_0]:=[\gamma_{\sigma_0,\sigma_3}] \in \LH$ 
is given by
\[
  \left\{\left. \left(
  \begin{array}{cccc}
  \l & 0 \\
  0 & \l^{-1}
  \end{array}
  \right)
  \,\right|\, \l \in \C \setminus \{0\} \right\},
\]
which is identified with the general linear group $\GL(1,\C)$.
Hence we have 
\begin{equation}\label{eq:LH}
  \LH 
  =\SL(2,\C)/\GL(1,\C)
  =\left\{\left. [a \gamma_0 a^{\ast}] \,\right|\, a \in {\rm SL}(2,\C) \right\}.
\end{equation}
Then the tangent space of $\LH$ at $[\gamma_0]$ is identified with 
the orthogonal complement of the Lie algebra $\mathfrak{gl}(1,\C)$ 
of $\GL(1,\C)$ with respect to $B^{\mathfrak{r}}$, that is,
\[
  T_{[\gamma_0]}\LH 
  = \mathfrak{gl}(1,\C)^{\perp} 
  = \left\{\left. h_{\xi}+v_{\eta} \,\right|\, \xi,\eta \in \C \right\},
\]
where $h_{\xi}$ and $v_{\eta}$ are horizontal and vertical vectors 
of $T_{(\sigma_0,\sigma_3)}U\H^3$ defined in \eqref{eq:horver}.
The restrictions to $T_{[\gamma_0]} \LH$ of $B^{\mathfrak{r}}$ and $B^{\mathfrak{i}}$ 
defined in \eqref{eq:reim-Killing} can be written by
\[
  B^{\mathfrak{r}}\left( X,X\right)= 4(|\xi|^2-|\eta|^2),\qquad
  B^{\mathfrak{i}}\left( X,X\right) = 8 \Im(\xi \bar{\eta}),
\]
for $X = h_{\xi}+v_{\eta} \in T_{[\gamma_0]} \LH$, respectively.
Thus $B^{\mathfrak{r}}$ and $B^{\mathfrak{i}}$ define pseudo-Riemannian metrics 
$B^{\mathfrak{r}}_{L}$ and $B^{\mathfrak{i}}_{L}$ on $\LH$ 
of neutral signature, respectively.
Of course, the projection
\begin{equation}\label{eq:hadjust}
  \hat{\pi} : (U\H^3, B_{U}) \longrightarrow (\LH, B^{\mathfrak{r}}_{L})
\end{equation}
defined in \eqref{eq:pihat} is a pseudo-Riemannian submersion.

Let $B_{L} := B^{\mathfrak{r}}_L + i B^{\mathfrak{i}}_L$ 
be the $\C$-valued $2$-tensor on $\LH =\SL(2,\C)/\GL(1,\C)$ 
induced from $B$ in \eqref{eq:Killing}. Then we have the following

\begin{proposition}\label{prop:BG}
For the the $\C$-valued symmetric $2$-tensor $\mathcal{G}$ on $\LH$ 
defined in \eqref{eq:Killing-coord}, it follows that
\[
  \mathcal{G}=-B_{L}.
\]
\end{proposition}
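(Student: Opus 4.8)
The plan is to use that both $\mathcal{G}$ and $B_{L}$ are $\C$-valued symmetric $2$-tensors invariant under the transitive action \eqref{eq:hol-act} of $\SL(2,\C)$, so that it suffices to compare them at the single base point $[\gamma_0]$ and then invoke polarization. The tensor $\mathcal{G}=\mathcal{G}^{\mathfrak{r}}+i\,\mathcal{G}^{\mathfrak{i}}=\mathcal{G}_{0}+i\,\mathcal{G}_{\pi/2}$ is invariant since each $\mathcal{G}_{\theta}$ is, and $B_{L}=B^{\mathfrak{r}}_{L}+i\,B^{\mathfrak{i}}_{L}$ is invariant because $B$ is $\operatorname{Ad}$-invariant while the isotropy $\GL(1,\C)$ preserves both $\mathfrak{gl}(1,\C)$ and $B^{\mathfrak{r}}$, the action $[\gamma]\mapsto[a\gamma a^{*}]$ being left translation on $\SL(2,\C)/\GL(1,\C)$. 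Hence it remains only to check $\mathcal{G}(X,X)=-B_{L}(X,X)$ for every $X=h_{\xi}+v_{\eta}\in T_{[\gamma_0]}\LH$.

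First I would place $[\gamma_0]$ in the chart $(\mu_1,\mu_2)$. The geodesic through $\sigma_0$ with initial velocity $\sigma_3$ is $\gamma_0(t)=\cosh t\,\sigma_0+\sinh t\,\sigma_3=\operatorname{diag}(e^{t},e^{-t})$, which the isometry \eqref{eq:isometry} carries to $(w,r)=(0,e^{t})$ in the upper-half-space model. Thus $\gamma_{+}=\infty$ and $\gamma_{-}=0$, so \eqref{eq:holcoord} gives $(\mu_1,\mu_2)=(0,0)$, a point of $\mathcal{U}$ off $\hat{\Delta}$. Evaluating \eqref{eq:Killing-coord} there, the denominator equals $1$, so $\mathcal{G}|_{[\gamma_0]}=4\,d\mu_1\,d\bar\mu_2$ and $\mathcal{G}(X,X)=4\,d\mu_1(X)\,\overline{d\mu_2(X)}$.

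Next I would evaluate the two differentials through the infinitesimal action. For $X=h_{\xi}+v_{\eta}$ as in \eqref{eq:horver} the matrix is off-diagonal with upper-right entry $\xi-\eta$ and lower-left entry $\bar\xi+\bar\eta$, so $a(s)=\exp(sX)=\mathrm{id}+sX+O(s^{2})$ has $a_{11}=a_{22}=1$, $a_{12}=s(\xi-\eta)$, $a_{21}=s(\bar\xi+\bar\eta)$ to first order. Substituting these into \eqref{eq:hol-act} at $(\mu_1,\mu_2)=(0,0)$ and differentiating at $s=0$ yields $d\mu_1(X)=-(\xi-\eta)$ and $d\mu_2(X)=\xi+\eta$, whence $\mathcal{G}(X,X)=-4(\xi-\eta)(\bar\xi+\bar\eta)$.

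Finally, $B_{L}(X,X)=B(X,X)=2\trace(X^{2})=4(\xi-\eta)(\bar\xi+\bar\eta)$, whose real and imaginary parts reproduce the formulas $B^{\mathfrak{r}}(X,X)=4(|\xi|^{2}-|\eta|^{2})$ and $B^{\mathfrak{i}}(X,X)=8\Im(\xi\bar\eta)$ recorded before the statement, which serves as a consistency check. Comparing the two expressions gives $\mathcal{G}(X,X)=-B_{L}(X,X)$ at $[\gamma_0]$, and invariance then propagates this to all of $\LH$. I expect the only genuine obstacle to lie in the bookkeeping of the two middle steps: correctly identifying the base-point coordinates (the sign in $\mu_1=-\gamma_{-}$ and the conjugation in $\mu_2=1/\bar\gamma_{+}$) and tracking the conjugations in the $\mu_2$-slot of \eqref{eq:hol-act}, so that the signs of $d\mu_1(X)$ and $d\mu_2(X)$ emerge correctly; the remaining matrix algebra is routine.
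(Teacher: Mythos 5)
Your proposal is correct and takes essentially the same route as the paper: both reduce to the base point $[\gamma_0]$, which sits at $(\m_1,\m_2)=(0,0)$, identify $T_{[\gamma_0]}\LH$ with the vectors $h_{\xi}+v_{\eta}$, and arrive at the identical differentials $d\m_1(X)=-\xi+\eta$, $d\m_2(X)=\xi+\eta$, hence $\mathcal{G}(X,X)=-4(\xi-\eta)(\bar{\xi}+\bar{\eta})=-B_{L}(X,X)$. The only difference is cosmetic: you compute these differentials by differentiating the action \eqref{eq:hol-act} along $\exp(sX)$, whereas the paper obtains the same tangent vector $\hat{X}$ from the explicit parametrization $\psi$ in \eqref{eq:parametrization} and the resulting coordinate formulas for $(\m_1,\m_2)$.
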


\begin{proof}
It is enough to check the equality at 
$[\gamma_0]=[\gamma_{\sigma_0,\sigma_3}] \in \LH$ only.
For a sufficiently small neighborhood $\mathcal{R}$ of the origin $o \in \R^4$,
consider a map $\psi : \mathcal{R} \rightarrow \SL(2,\C)$ given by
\begin{equation}\label{eq:parametrization}
  \psi(u_1,u_2,v_1,v_2)=
  \left(\begin{array}{cc}
  1 & u_1-iv_2 + i u_2 - v_1\\
  u_1-i v_2 -iu_2+v_1 & 1+(u_1 - i v_2)^2 + (u_2+i v_1)^2 
  \end{array}\right).
\end{equation}
This map $\psi$ may be considered as a parametrization of $\LH=\SL(2,\C)/\GL(1,\C)$ 
around $\psi(o)=[\gamma_0]$. For $\xi,\, \eta\in \C$, set
\begin{equation}\label{eq:X}  
  \Vect{\vect{x}}_{\xi,\eta} :=
  (\Re\xi) \left.\frac{\partial}{\partial u_1}\right|_{o} +
  (\Im \xi) \left.\frac{\partial}{\partial u_2}\right|_{o} +
  (\Re\eta) \left.\frac{\partial}{\partial v_1}\right|_{o} + 
  (\Im \eta) \left.\frac{\partial}{\partial v_2}\right|_{o} \in T_{o}\mathcal{R},
\end{equation}
and $X:=\psi_{*}(\Vect{\vect{x}}_{\xi,\eta}) \in T_{[\gamma_0]} \LH$.
Then we have $X=h_{\xi}+v_{\eta}$, and
\begin{equation}\label{eq:norm}
  B^{\mathfrak{r}}_{L}\left( X, X\right)
  ={B^{\mathfrak{r}}} \left( X, X\right) 
  = 4(|\xi|^2-|\eta|^2),\qquad
  B^{\mathfrak{i}}_{L}\left( X, X\right)
  ={B^{\mathfrak{i}}} \left( X, X\right) 
  = 8 \Im(\xi \bar{\eta})
\end{equation}
at $[\gamma_0] \in \LH$, where $h_{\xi}$, $v_{\eta}$ are given in \eqref{eq:horver}.

On the other hand, set
$\hat{\psi}:=\pi_1\circ \psi : \mathcal{R} \rightarrow \LH$, 
where $\pi_1 : \SL(2,\C) \ni a \mapsto [a \gamma_0 a^*] \in \LH$.
The coordinates $(\m_1,\m_2)$ (see \eqref{eq:holcoord}) of 
$\hat{\psi}(u_1,u_2,v_1,v_2)$ can be calculated as
\[
  \m_1(u_1,u_2,v_1,v_2) 
  =  -\frac{(u_1+iu_2)-(v_1+iv_2)}{1+(u_1 - i v_2)^2 + (u_2+i v_1)^2},\quad
  \m_2(u_1,u_2,v_1,v_2) 
  =  (u_1+iu_2)+(v_1+iv_2).
\]
Then $\hat{X}:=\hat{\psi}_{*}(\Vect{\vect{x}}_{\xi,\eta}) \in T_{[\gamma_0]} \LH$ is given by
\begin{eqnarray*}
  \hat{X} 
  =(-\xi+\eta)\frac{\partial}{\partial \m_1}+(\xi+\eta)\frac{\partial}{\partial \m_2}+
         (-\bar{\xi}+\bar{\eta})\frac{\partial}{\partial \bar{\m}_1}+(\bar{\xi}
         +\bar{\eta})\frac{\partial}{\partial \bar{\m}_2}.
\end{eqnarray*}
By \eqref{eq:norm}, we have 
\[ 
  \mathcal{G}^{\mathfrak{r}}( \hat{X},\hat{X} )
  = -4(|\xi|^2-|\eta|^2)
  =- B^{\mathfrak{r}}_{L}\left( X,X\right),\qquad
  \mathcal{G}^{\mathfrak{i}}(\hat{X},\hat{X})
  = -8\Im(\xi\bar{\eta})
  =- B^{\mathfrak{i}}_{L}\left( X,X\right)
\]
at $[\gamma_0] \in \LH$,
where $\mathcal{G}^{\mathfrak{r}}$ and $\mathcal{G}^{\mathfrak{i}}$ 
are as in \eqref{eq:metrics}.
\end{proof}

\begin{proof}[{\bf Proof of Proposition $\ref{prop:symplectic}$}] 
\hspace{2mm}

By a similar calculation as in the proof of Proposition \ref{prop:BG}, 
the complex structure $J$ in \eqref{eq:cpx} and 
the para-complex structure $P$ in \eqref{eq:P-cpx} satisfy
\[
  J(h_{\xi}+v_{\eta})=h_{i\xi}+v_{i\eta},\qquad P(h_{\xi}+v_{\eta})=h_{\eta}+v_{\xi},
\]
for a tangent vector $h_{\xi}+v_{\eta}\in T_{[\gamma_0]}\LH$.
Thus by Proposition \ref{prop:BG}, the K\"ahler form $\omega_J$ 
and the para-K\"ahler form $\omega_P$ 
defined in \eqref{eq:K-pK} can be calculated as
\begin{equation}\label{eq:K-pK-forms}
  \omega_{P}(X,Y) = -\omega_{J}(X,Y)
  =-2\Re(\xi \bar{\delta} - \eta\bar{\beta}),
\end{equation}
where $X=h_{\xi}+v_{\eta}$, $Y=h_{\beta}+v_{\delta} \in T_{[\gamma_0]} \LH$.

To calculate the canonical symplectic form $\omega$ in \eqref{eq:cano-form}, set
$\tilde{\psi}:=\pi_2\circ \psi : \mathcal{R} \rightarrow U\H^3$, 
where $\psi$ is the map in \eqref{eq:parametrization} and
$\pi_2:\SL(2,\C) \ni a \mapsto (a a^*, a\sigma_3 a^*) \in U\H^3$.
Then the horizontal lifts of $X=h_{\xi}+v_{\eta}$, 
$Y=h_{\beta}+v_{\delta} \in T_{[\gamma_0]} \LH$ are given by
$\tilde{X} :=\tilde{\psi}_{*}(\Vect{\vect{x}}_{\xi,\eta})=( h_{\xi}, h_{\eta} )$, 
$\tilde{Y}:=\tilde{\psi}_{*}(\Vect{\vect{x}}_{\beta,\delta})=( h_{\beta}, h_{\delta}) 
\in T_{(\sigma_0,\sigma_3)}U\H^3$,
where $h_{\xi}$, $h_{\beta}$, $\cdots$ are as in \eqref{eq:tangUH3} 
and $\Vect{\vect{x}}_{\xi,\eta}$, $\Vect{\vect{x}}_{\beta,\delta}$ are given in \eqref{eq:X}.
By \eqref{eq:K-pK-forms}, we have
\begin{eqnarray*}\label{eq:calc-cano}
  2\omega_{[\gamma_0]}(\tilde{X},\tilde{Y})
  &=& 2d\Theta_{(\sigma_0,\sigma_3)}(\tilde{X},\tilde{Y})
  = \inner{h_{\xi}}{h_{\delta} }-\inner{h_{\beta}}{h_{\eta}}\\
  &=& 2\Re(\xi \bar{\delta} - \eta\bar{\beta})
  = -\omega_{P}(X,Y) = \omega_{J}(X,Y)
\end{eqnarray*}
at $[\gamma_0] \in \LH$, where $\Theta$ denotes 
the canonical contact form in \eqref{eq:cano-cont}.
\end{proof}

%%%%%%%%%%%%%%%%%%%%%%%%%%%%%%%%%%
\begin{remark}
\label{rem:matome}
The metric $\mathcal{G}^{\mathfrak{i}}=\Im \mathcal{G}$ in \eqref{eq:metrics} 
is the twice of the K\"ahler metric defined in \cite[Definition 12]{GG}.
In fact, we defined $\mathcal{G}$ as in \eqref{eq:Killing-coord} 
so that the double fibration 

\vspace{3mm}
\unitlength 0.1in
\begin{picture}( 38.3000,  4.2000)(-0.0000,-31.1000)
\put(27.0000,-32.7000){\makebox(0,0)[lb]{$(\LH=\SL(2,\C)/\GL(1,\C),\, 
B^{\mathfrak{r}}_L=-\mathcal{G}^{\mathfrak{r}})$}}%
\put(15.3000,-28.5000){\makebox(0,0)[lb]{$(U\H^3=\SL(2,\C)/\U(1),\, B_U)$}}%
\put(1.3000,-32.7000){\makebox(0,0)[lb]{$(\H^3=\SL(2,\C)/\SU(2),\, 
                                                                                             \langle~,~\rangle)$}}%
\put(11.9000,-29.0000){\makebox(0,0)[lb]{$\pi$}}%
\put(36.0000,-29.0000){\makebox(0,0)[lb]{$\hat{\pi}$}}%
{\color[named]{Black}{%
\special{pn 8}%
\special{pa 1470 2860}%
\special{pa 1090 3070}%
\special{fp}%
\special{sh 1}%
\special{pa 1090 3070}%
\special{pa 1158 3056}%
\special{pa 1138 3044}%
\special{pa 1140 3020}%
\special{pa 1090 3070}%
\special{fp}%
}}%
{\color[named]{Black}{%
\special{pn 8}%
\special{pa 3380 2850}%
\special{pa 3760 3060}%
\special{fp}%
\special{sh 1}%
\special{pa 3760 3060}%
\special{pa 3712 3010}%
\special{pa 3714 3034}%
\special{pa 3692 3046}%
\special{pa 3760 3060}%
\special{fp}%
}}%
\end{picture}
\vspace{6mm}

\noindent
is compatible, that is, both $\pi$ in \eqref{eq:adjust} 
and $\hat{\pi}$ in \eqref{eq:hadjust} are pseudo-Riemannian submersions.
\end{remark}

\begin{remark}[A relationship to the Fubini-Study metric]
\label{rem:signature}
Consider a holomorphic curve
$F : \P^1=\hat{\C} \rightarrow \LH$ given by
$F|_{\C} : \C \ni \m \longmapsto (\m,\m) \in \LH$.
The image of $F$ in $\LH$ can be considered as
\[
  \LoH=\left\{\left. [\gamma] \in\LH \,\right|\, 
     \gamma ~\text{through the origin}~ o=(0,0,0) \in \B^3\right\},
\]
where $\B^3$ denotes the {\it Poincar\'e ball model\/} of $\H^3$:
\[
  \B^3=\left(\left\{\left. (x,y,z)\in \R^3 \,\right|\, x^2+y^2+z^2<1 \right\},\, 
  4\frac{dx^2+dy^2+dz^2}{(1-x^2-y^2-z^2)^2} \right).
\]

\begin{figure}[htb]
 \begin{tabular}{cc}
  \resizebox{5cm}{!}{\includegraphics{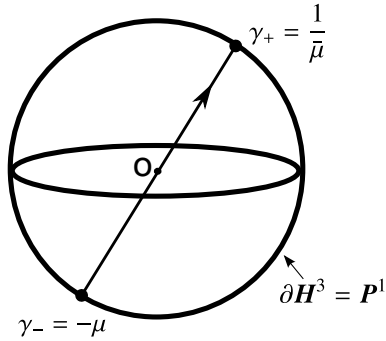}} 
 \end{tabular}
 \caption{An oriented geodesic through the origin.}
 \label{fig:Line}
\end{figure}

\noindent
We call $F$ or $\LoH$ the {\it standard embedding\/} of $\P^1$.
Moreover, if we equip on $\P^1$ the Fubini-Study metric 
$g_{FS}$ of constant curvature $1$, 
then the standard embedding
\[
  F : (\P^1,g_{FS}) \longrightarrow (\LH,\mathcal{G}^{\mathfrak{r}})
\]
is an isometric embedding.
In fact, we defined $\mathcal{G}$ as the opposite sign of $B_{L}$ 
(Proposition \ref{prop:BG}) because of this fact.
\end{remark}

%%%%%%%%%%%%%%%%%%%%%%%%%%%%%%%%%%
\section{A Representation Formula for Developable Surfaces}
\label{sec:null}
In this section, we shall prove Theorem~\ref{thm:null} in the introduction.
First, we review fundamental facts on isometric immersions of $\H^2$ 
into $\H^3$ as surfaces in $\H^3$, 
and prove that isometric immersions of $\H^2$ into $\H^3$ are 
developable (Proposition \ref{prop:Portnoy}).
Then we shall prove Theorem~\ref{thm:null} (Proposition~\ref{prop:representation}).

\subsection{Isometric immersions and developable surfaces}
\hspace{2mm}

In this paper, a {\it surface} in $\H^3$ is considered as an immersion $f$ 
of a differentiable $2$-manifold $\Sigma$ into $\H^3$ (cf.\ \eqref{eq:Herm-model}):
\[
  f : \Sigma \longrightarrow \H^3 \subset \L^4 = \Herm(2). 
\]
We denote by $g=f^{\ast}\inner{~}{~}$ {\it the first fundamental form} of $f$.
For the unit normal vector field $\vect{\nu}$ of $f$, we denote by  $A$ and $I\!I$ 
the {\it shape operator} and the {\it second fundamental form} of $f$, respectively, that is,
$A = -(f_{\ast})^{-1}\circ \vect{\nu}_{\ast}$,
$I\!I(V,W)= - \inner{ \vect{\nu}_{\ast}(V) }{ f_{\ast}(W) }$,
where $V$ and $W$ are vector fields on $\Sigma$.
Let $k_1$, $k_2$ be the {\it principal curvatures} of $f$, then
the {\it extrinsic curvature} $K_{\rm ext}$ and the {\it mean curvature} $H$ 
can be written as
\[
  K_{\rm ext}=k_1k_2,\qquad H=\frac{k_1+k_2}{2},
\]
respectively.
If we denote by $K$ and $\nabla$ the Gaussian curvature and 
the Levi-Civita connection of the Riemannian 2-manifold $(\Sigma, g)$, respectively, 
then we have 
\begin{equation}\label{eq:Gauss}
  K=-1+K_{\rm ext},
\end{equation}
\begin{equation}\label{eq:Codazzi}
  \nabla_{V}A(W)=\nabla_{W}A(V),
\end{equation}
for vector fields $V,\, W$ on $\Sigma$.
We call \eqref{eq:Gauss} the {\it Gauss equation}, 
and \eqref{eq:Codazzi} the {\it Codazzi equation}.
A surface in $\H^3$ is said to be {\it extrinsically flat}
if its extrinsic curvature is identically zero.
By the Gauss equation, we have that an isometric immersion of $\H^2$ into $\H^3$
is a complete extrinsically flat surface.

On the other hand, any unit speed geodesic in $\H^3$ can be expressed as
\[
  \gamma_{p,v}(t)=p\cosh t+v\sinh t ,\qquad (p,v) \in U\H^3.
\]
\begin{definition}[Ruled surfaces and developable surfaces]
A  {\it ruled surface} in $\H^3$ is a locus of $1$-parameter family of geodesics in $\H^3$.
For a ruled surface $f: \Sigma \rightarrow \H^3$, 
there exists a local coordinate system $\varphi=(s,t)$ of $\Sigma$ such that
\[
  (f\circ\varphi^{-1})(s,t) = c(s) \cosh t + v(s) \sinh t ,
\]
where $c$ is a curve in $\H^3$ and 
$v$ is a unit normal vector field along $c$.
A ruled surface is said to be {\it developable} if it is extrinsically flat.
\end{definition}

Then we have the following
\begin{proposition}[{\cite[Theorem 4]{Portnoy}}]\label{prop:Portnoy}
A complete extrinsically flat surface in $\H^3$ is developable.
\end{proposition}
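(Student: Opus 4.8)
The plan is to extract a one-dimensional distribution of \emph{rulings} from the kernel of the shape operator and to show, via the Codazzi equation, that its integral curves are geodesics simultaneously of $\Sigma$ and of $\H^3$; completeness then upgrades these to complete geodesic rulings filling out the whole surface. First I would record the intrinsic content of the hypothesis. Since $K_{\rm ext}\equiv 0$, the Gauss equation \eqref{eq:Gauss} gives $K\equiv -1$, so $(\Sigma,g)$ is a complete surface of constant curvature $-1$; in particular every $g$-geodesic is defined on all of $\R$. Extrinsic flatness also reads $\det A=K_{\rm ext}=0$, so at each point $A$ has rank $0$ or $1$. Let $\mathcal{R}\subset\Sigma$ be the open \emph{non-umbilic} set $\{p : A_p\neq 0\}$; on $\mathcal{R}$ the rank is exactly one, and $\mathcal{D}:=\ker A$ is a smooth line field.

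Next I would exploit the Codazzi equation \eqref{eq:Codazzi} on $\mathcal{R}$. Choose a local orthonormal frame $\{e_1,e_2\}$ diagonalizing $A$, with $Ae_1=k\,e_1$ (where $k\neq 0$) and $Ae_2=0$, and let $\omega$ be the associated connection $1$-form, so that $\nabla_X e_1=\omega(X)\,e_2$ and $\nabla_X e_2=-\omega(X)\,e_1$. Substituting $V=e_1$, $W=e_2$ into $(\nabla_V A)W=(\nabla_W A)V$ and comparing components, the $e_2$-component of the identity $(\nabla_{e_1}A)e_2=(\nabla_{e_2}A)e_1$ reduces to $k\,\omega(e_2)=0$, hence $\omega(e_2)=0$, i.e. $\nabla_{e_2}e_2=0$. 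Thus the integral curves of $\mathcal{D}$ are geodesics of $(\Sigma,g)$. Since moreover their normal curvature is $I\!I(e_2,e_2)=\inner{Ae_2}{e_2}=0$, the immersion $f$ carries each such curve to a curve of $\H^3$ with vanishing geodesic and normal curvature, hence to a geodesic of $\H^3$: these are the rulings.

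Finally I would globalize using completeness. Each ruling is a geodesic of the complete surface $(\Sigma,g)$, so it extends to a maximal geodesic defined on all of $\R$; because \emph{being a geodesic of $\H^3$} is a closed ODE condition, $f$ maps the whole extended curve to a complete geodesic of $\H^3$, even across the umbilic boundary $\partial\mathcal{R}$. On the interior of the umbilic set, where $A\equiv 0$, the surface is totally geodesic, i.e. an open piece of a totally geodesic copy of $\H^2$, which is trivially a locus of geodesics. Patching these descriptions shows that a geodesic ruling passes through every point of $\Sigma$ and that $f$ admits, near each point, the ruled parametrization $c(s)\cosh t+v(s)\sinh t$; being extrinsically flat, it is developable.

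The step I expect to be the main obstacle is this last globalization at the umbilic set. The line field $\mathcal{D}$ lives only on $\mathcal{R}$, so I must argue that the geodesic rulings extend continuously and consistently across $\partial\mathcal{R}$ and that they assemble into a genuine one-parameter family, all \emph{without} assuming real-analyticity. The closedness of the geodesic condition handles each ruling individually, but ruling out pathological accumulation of umbilic points and producing the coherent local parametrization $(s,t)\mapsto c(s)\cosh t+v(s)\sinh t$ on all of $\Sigma$ is the delicate point.
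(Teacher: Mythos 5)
Your first two steps (Codazzi implies the kernel line field of $A$ on the non-umbilic set integrates to curves that are simultaneously intrinsic geodesics and geodesics of $\H^3$; the umbilic interior is totally geodesic) are correct and match the paper's setup, but the globalization you yourself flag as delicate is a genuine gap, and the fix you propose does not work. Along an extended intrinsic geodesic $\gamma$, ``$f\circ\gamma$ is a geodesic of $\H^3$'' is equivalent to $I\!I(\gamma',\gamma')=0$, and the set of parameters where this holds is indeed closed --- but closedness only extends the condition to the closure of the interval where you already know it; it does not propagate it \emph{across} the umbilic set. If $\gamma$ leaves the non-umbilic set (your $\mathcal{R}$, the paper's $\mathcal{W}^c$) at finite time, traverses part of the umbilic set $\mathcal{W}$, and re-enters $\mathcal{W}^c$, nothing forces $\gamma'$ to lie in the kernel line field of the new component: that direction is dictated by the surface there, not by $\gamma$, and if $\gamma$ re-enters transversally then $I\!I(\gamma',\gamma')\neq 0$ and $f\circ\gamma$ ceases to be an ambient geodesic. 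So your argument does not produce a complete geodesic ruling through every point; the ``pathological accumulation of umbilic points'' you worry about is exactly what must be excluded, and nothing in your proposal excludes it.

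The missing idea, which is the heart of the paper's proof, is the hyperbolic Massey lemma (Lemma \ref{lem:Massey}): on each asymptotic curve in $\mathcal{W}^c$, parametrized by arclength $t$, the mean curvature satisfies $\frac{\partial^2}{\partial t^2}(1/H)=1/H$, hence
\[
  \frac{1}{H}=a\cosh t+b\sinh t .
\]
Since $K_{\rm ext}=0$ forces $A=0$, i.e.\ $H=0$, at umbilic points, while $a\cosh t+b\sinh t$ is finite at every finite $t$, an asymptotic geodesic in $\mathcal{W}^c$ can never reach $\partial\mathcal{W}$ at finite parameter: the rulings remain in $\mathcal{W}^c$ for all time, so the crossing scenario above never occurs and $f|_{\mathcal{W}^c}$ is ruled by complete geodesics. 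The remaining piece --- that $\partial\mathcal{W}$ is itself a disjoint union of complete geodesics --- is handled in the paper by a limiting argument: for $p\in\partial\mathcal{W}$ take $p_n\to p$ in $\mathcal{W}^c$ with rulings $\gamma_{p_n,v_n}$, show via a Cauchy--Schwarz estimate in $\L^4$ that the directions $v_n$ subconverge, and observe that the limit geodesic cannot meet $\mathcal{W}^c$ (the ruling through any point of $\mathcal{W}^c$ stays in $\mathcal{W}^c$), hence lies in $\partial\mathcal{W}$. Together with the totally geodesic umbilic interior this yields the ruled structure on all of $\Sigma$. Without the mean-curvature ODE (or some substitute controlling rulings at $\partial\mathcal{W}$), your proof is incomplete precisely at the step you identified.
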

To show this, we first prove an analogue of 
{\it Massey's lemma} \cite[Lemma 2]{Massey} (cf.\ Remark \ref{rem:Massey}).
For a surface $f: \Sigma \rightarrow \H^3$,
a curve in $\Sigma$ is said to be {\it asymptotic} 
if each tangent space of the curve gives the kernel of 
the second fundamental form of $f$.

\begin{lemma}[Hyperbolic Massey's lemma]
\label{lem:Massey}
For an extrinsically flat surface $f : \Sigma \rightarrow \H^3$,
let $\mathcal{W}$ be the set of umbilic points of $f$
and $\gamma$ an asymptotic curve in the non umbilic point set 
$\mathcal{W}^c= \Sigma \setminus \mathcal{W}$.
Then the mean curvature $H$ of $f$ satisfies
\[
  \frac{\partial^2}{\partial t^2} \left( \frac{1}{H} \right) = \frac{1}{H}, 
\]
on $\gamma$, where $t$ denotes the arc length parameter of $\gamma$.
\end{lemma}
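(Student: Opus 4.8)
The plan is to work entirely on the non-umbilic set $\mathcal{W}^c$ and to reduce the statement to a single second-order ODE by feeding the Codazzi and Gauss equations into a principal frame. Since $f$ is extrinsically flat, $K_{\rm ext}=k_1k_2=0$; on $\mathcal{W}^c$ the principal curvatures are distinct, so exactly one of them vanishes, say $k_1\equiv 0$ and $k_2=2H\neq 0$. I would choose a smooth orthonormal frame $\{e_1,e_2\}$ of principal directions with $Ae_1=0$ and $Ae_2=2H\,e_2$. Then $e_1$ spans the kernel of $I\!I$, so the asymptotic curve $\gamma$ is an integral curve of $e_1$, and along $\gamma$ its arc length parameter satisfies $\partial/\partial t=e_1$. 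I encode the Levi-Civita connection of $(\Sigma,g)$ through a single $1$-form $\omega$ by $\nabla_{e_i}e_1=\omega(e_i)\,e_2$ and $\nabla_{e_i}e_2=-\omega(e_i)\,e_1$, abbreviating $p:=\omega(e_1)$, $q:=\omega(e_2)$.

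First I would apply the Codazzi equation \eqref{eq:Codazzi} with $(V,W)=(e_1,e_2)$. Expanding $\nabla_{e_1}A(e_2)=\nabla_{e_2}A(e_1)$ using $Ae_1=0$, $Ae_2=2H\,e_2$ and the coefficients above, the $e_1$-component collapses to $Hp=0$, forcing $p=0$ on $\mathcal{W}^c$ (since $H\neq 0$); equivalently $\nabla_{e_1}e_1=0$, so the asymptotic curves are geodesics of $(\Sigma,g)$. The $e_2$-component reads $e_1 H=-Hq$, that is, $q=-e_1(\log H)$.

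Next I would bring in the Gauss equation: extrinsic flatness and \eqref{eq:Gauss} give $K\equiv -1$. Computing $K=\inner{R(e_1,e_2)e_2}{e_1}$ from the connection coefficients yields $K=-e_1q+e_2p-(p^2+q^2)$, and after substituting $p=0$ this becomes $e_1q+q^2=1$. Restricting to $\gamma$, where $e_1=\partial/\partial t$ and $q=-\partial_t(\log H)$, the relation $e_1q+q^2=1$ becomes
\[
  -\frac{1}{H}\frac{\partial^2 H}{\partial t^2}+\frac{2}{H^2}\left(\frac{\partial H}{\partial t}\right)^2=1,
\]
which is exactly $\partial^2(1/H)/\partial t^2=1/H$, as claimed.

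The step that most needs care is the bookkeeping of sign conventions: one must fix a single consistent convention linking the connection $1$-form $\omega$, the curvature tensor $R$, and the sign in the Gauss equation, since a slip there would flip the sign of the right-hand side (and, in the Euclidean analogue $K\equiv 0$, the corresponding identity degenerates to $\partial^2(1/H)/\partial t^2=0$, which is the original Massey lemma). A minor preliminary is to check that the principal frame $\{e_1,e_2\}$ is smooth on $\mathcal{W}^c$ (guaranteed by the distinctness of $k_1,k_2$) and that it is $e_1$, not $e_2$, that is tangent to $\gamma$ (forced by $Ae_1=0$). With these fixed, the Codazzi and Gauss computations are routine.
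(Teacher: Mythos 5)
Your proof is correct: with your conventions the Codazzi equation at $(V,W)=(e_1,e_2)$ gives components $-2Hp=0$ and $2\,e_1H=-2Hq$, the curvature identity $K=-e_1q+e_2p-(p^2+q^2)$ is right, and with $p\equiv 0$, $K=-1$ the Riccati equation $e_1q+q^2=1$ together with $q=-e_1(\log H)$ yields exactly $\partial^2(1/H)/\partial t^2=1/H$; your preliminary checks (smoothness of the principal frame on $\mathcal{W}^c$, $H\neq 0$ there, $e_1$ tangent to $\gamma$) are the correct ones, and the orientation ambiguity of $e_1$ is harmless since the ODE is invariant under $t\mapsto -t$.

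The route is a moving-frame variant of the paper's coordinate computation, and the two match up term by term. The paper takes curvature line coordinates $(s,v)$ with asymptotic $v$-curves, uses one Codazzi component to show $g_{22}=g_{22}(v)$ and reparametrizes by $dt=\sqrt{g_{22}(v)}\,dv$ to reach $g=g_{11}\,ds^2+dt^2$, $I\!I=h_{11}\,ds^2$; the Gauss equation then linearizes as $\partial_t^2\sqrt{g_{11}}=\sqrt{g_{11}}$, and the remaining Codazzi component gives the separation $h_{11}=a(s)\sqrt{g_{11}}$, so $1/H=2\sqrt{g_{11}}/a(s)$ satisfies the same linear ODE. In your frame one has $e_1=\partial_t$, $e_2=\partial_s/\sqrt{g_{11}}$ and $q=\partial_t\log\sqrt{g_{11}}$, so your relation $q=-e_1(\log H)$ is precisely the paper's $h_{11}=a(s)\sqrt{g_{11}}$, and your Riccati equation is the logarithmic-derivative form of $\partial_t^2\sqrt{g_{11}}=\sqrt{g_{11}}$: you linearize with $1/H$ instead of $\sqrt{g_{11}}$, which lets you skip both the reparametrization step and the integration constant $a(s)$. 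What the paper's version buys in exchange is the adapted coordinate system itself, which is reused verbatim later (in the proof of Proposition \ref{prop:Portnoy} and in Claim 1 of Section \ref{subsec:3_sufficient}), whereas your version delivers $\nabla_{e_1}e_1=0$ --- asymptotic curves are intrinsic geodesics, hence (combined with $A e_1=0$) geodesics of $\H^3$ --- as an immediate by-product, a fact the paper extracts separately afterwards. Your remark that in the Euclidean analogue ($K\equiv 0$) the same computation degenerates to $\partial^2(1/H)/\partial t^2=0$, recovering Massey's original lemma, is also accurate.
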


\begin{proof}
Take a non umbilic point $p \in \mathcal{W}^c$, 
and curvature line coordinate system $(s,v)$ around $p$
with $v$-curves asymptotic.
Then the first and second fundamental forms $g$ and $I\!I$ are expressed as
$g = g_{11} ds^2 + g_{22} dv^2 ,~ I\!I =h_{11}ds^2 ~(h_{11} \neq 0)$,
and hence the Codazzi equation~\eqref{eq:Codazzi} is equivalent to
\begin{equation}\label{0}
  \frac{\partial h_{11}}{\partial v}=\frac{h_{11}}{2g_{11}} \frac{\partial g_{11}}{\partial v} ,
\end{equation}
\begin{equation}\label{1}
  0=\frac{h_{11}}{2g_{11}} \frac{\partial g_{22}}{\partial s}. 
\end{equation}
By \eqref{1}, $g_{22}$ depends only on $v$.
Reparametrizing with $dt=\sqrt{g_{22}(v)}\, dv$, we obtain
$g = g_{11} ds^2 + dt^2,~ I\!I = h_{11} ds^2 ~(h_{11} \neq 0)$.
In this coordinate system, 
each $t$-curve is an asymptotic curve parametrized by arc length
and the Gaussian curvature $K$ of $f$ is written as
\[
  K=-\frac{1}{\sqrt{ g_{11} } }\frac{\partial^2 \sqrt{g_{11}}}{\partial t^2}.
\]
Since $f$ is extrinsically flat, the Gauss equation \eqref{eq:Gauss} yields
\begin{equation}
  \frac{\partial^2 \sqrt{g_{11}}}{\partial t^2}=\sqrt{g_{11}}. \label{2}
\end{equation}
On the other hand, by \eqref{0}, we have
\[
  \frac{\partial}{\partial t}\log\frac{h_{11}}{\sqrt{g_{11}}}
  =\frac{1}{h_{11}}\frac{\partial h_{11}}{\partial t}-\frac{1}{2 
        g_{11}}\frac{\partial g_{11}}{\partial t}=0,
\]
and hence there exists a function ${a}={a}(s)$ such that
\[
  h_{11}(s,t) = a(s) \sqrt{g_{11}(s,t)} \qquad (a(s) \neq 0). 
\]
Then the mean curvature $H$ of $f$ can be written as
$H =a(s)/(2\sqrt{g_{11}})$. Besides \eqref{2}, we have
\begin{equation*}
  \frac{\partial^2}{\partial t^2} \left( \frac{1}{H} \right)
  =\frac{\partial^2}{\partial t^2}\frac{2\sqrt{g_{11}}}{a(s)}
  =\frac{2}{a(s)}\frac{\partial^2}{\partial t^2}\sqrt{g_{11}}
  =\frac{2}{a(s)}\sqrt{g_{11}}
  =\frac{1}{H}.
\end{equation*}
\end{proof}

\begin{remark}
\label{rem:Massey}
Although original Massey's lemma \cite[Lemma 2]{Massey} is for flat surfaces in $\R^3$,
we can generalize it for extrinsically flat surfaces in $\S^3$ in the same way.
On the other hand, Murata and Umehara generalized Massey's lemma 
for a class of flat surfaces with singlarities ({\it flat fronts\/}) 
in $\R^3$ \cite[Lemma 1.15]{MurataUmehara}.
\end{remark}

\noindent
{\bf Proof of Proposition $\ref{prop:Portnoy}$}
\hspace{2mm}

Most part of this proof is a modification of the proof of 
Hartman-Nirenberg theorem given by Massey \cite{Massey}.
However, some part of the original Massey's proof is not valid for hyperbolic case, 
thus the final part of this proof is written carefully (see Claim below).

Let $f : \Sigma \rightarrow \H^3$ be a complete extrinsically flat surface
and $\mathcal{W}$ the set of umbilic points of $f$.
Since the restriction of $f$ to $\mathcal{W}$ is a totally geodesic embedding, 
$f|_{\mathcal{W}}$ is ruled.
By the proof of Lemma~\ref{lem:Massey}, 
for any non umbilic point in $\mathcal{W}^c=\Sigma\setminus\mathcal{W}$, 
there exists a local coordinate neighborhood $\left(U ; (s,t) \right)$ 
around the point such that
\[
  g = g_{11} ds^2 + dt^2 ,\qquad I\!I = h_{11} ds^2 \quad ( h_{11} \neq 0). 
\]
Then it can be shown that the geodesic curvature of each $t$-curve vanishes anywhere.
This means that any asymptotic curve in $\mathcal{W}^c$ is a part of 
geodesic in $\H^3$. For a fixed point $q \in \mathcal{W}^c$, 
let $G(q)$ be the unique asymptotic curve in $\mathcal{W}^c$ passing through $q$.
By Lemma~\ref{lem:Massey}, it follows that the mean curvature $H$ is given by
\begin{equation}\label{eq:meancurva}
  H=\frac{1}{a \cosh t + b \sinh t} 
\end{equation}
on $G(q)$, where $a,\, b$ are constants and $t$ denotes the distance induced from the 
first fundamental form of $f$ measured from $q$.
If $G(q)$ intersects with the boundary $\partial \mathcal{W}$,
the mean curvature $H$ vanishes at $Q \in \partial \mathcal{W} \cap G(q)$, 
a contradiction.
Thus any asymptotic curve in $\mathcal{W}^c$ does not intersect with 
the boundary of $\mathcal{W}^c$,
and hence we have $f|_{\mathcal{W}^c}$ is ruled.
It is sufficient to show the following

\vspace{2mm}
\noindent
{\bf Claim .}~{\it
$\partial \mathcal{W}$ is a disjoint union of geodesics in $\H^3$.
}

\begin{proof}
For a point $p \in \partial \mathcal{W}$,
there exists a sequence $\{p_n\}_{n \in \N}$ in $\mathcal{W}^c$ such that
$ \lim_{n\rightarrow \infty} p_n = p .$
Let $G(p_n)$ be the unique asymptotic curve through $p_n \in \mathcal{W}^c$.
Since $G(p_n)$ is a geodesic in $H^3$, we can express as
$G(p_n)(t)=p_n \cosh t + v_n \sinh t$,
with a unit tangent vector $v_n \in T_{p_n}\H^3$.
We shall prove that there exists $v$ of the limit of $\{v_n\}_{n \in \N}$, 
taking a subsequence, if necessary.
Set $p_n=(p_{0_n},\vect{p}_n)$, 
$v_n=(v_{0_n},\vect{v}_n) \in \L^4= \R \times \R^3 $.
Then we have
\[
  -p_{0_n}^2+|\vect{p}_n|_{E}^2=-1 ,\qquad  
  -v_{0_n}^2+|\vect{v}_n|_{E}^2=1,\qquad 
  -p_{0_n}v_{0_n}+\inner{\vect{p}_n}{\vect{v}_n}_{E}=0,
\]
for all $n \in \N$, where $\langle  \cdot,\cdot \rangle_{E}$ is 
the Euclidean inner product of $\R^3$ and
$|\cdot|_{E}$ is the associated Euclidean norm.
By the Cauchy-Schwartz inequality, 
\begin{equation*}
  |v_{0_n}|
  = \frac{1}{p_{0_n}}|\inner{\vect{p}_n}{\vect{v}_n}_{E}| 
  \leq \frac{1}{p_{0_n}}|\vect{p}_n|_{E}|\vect{v}_n|_{E} 
  = \sqrt{ \frac{p_{0_n}^2-1}{p_{0_n}^2} }\sqrt{v_{0_n}^2+1},
\end{equation*}
and we have
\begin{equation}
  \label{ineq:principle}
  \frac{|v_{0_n}|}{\sqrt{v_{0_n}^2+1}} \leq  \sqrt{ 1-\frac{1}{p_{0_n}^2} } \leq 1, 
\end{equation}
for $n \in \N$. If $|v_{0_n}| \rightarrow \infty$,
\[
  \frac{|v_{0_n}|}{\sqrt{v_{0_n}^2+1}} \longrightarrow 1
\]
holds and we have
$p_{0_n} \rightarrow \infty$ by \eqref{ineq:principle}.
But it contradicts with $\lim_{n\rightarrow \infty} p_n = p$.
Thus there exists $R>0$ such that $\{v_n\}_{n \in \N} \subset B(R)$, 
where $B(R)=\{ {}^t(x_0,x_1,x_2,x_3)\in \L^4 \,|\, x_0^2+x_1^2+x_2^2+x_3^2 \leq R \}$.
If we set $\S^3_1:=\{ \vect{x}\in \L^4 \,|\, \inner{\vect{x}}{\vect{x}}=1 \}$,
we also have $\{v_n\}_{n \in \N} \subset \S^3_1 \cap B(R)$.
Since $\S^3_1 \cap B(R)$ is compact, 
there exists a subsequence $\{ v_{n_k} \} \subset  \{ v_n \}$ 
such that $\lim_{k\rightarrow \infty} v_{n_k} = v$ exists.
Therefore we can define
$G(p) = \lim_{n \rightarrow \infty} G(p_n) \subset \mathcal{W}^c \cup \partial \mathcal{W}$
as $\gamma_{p,v}$. If $G(p) \cap \mathcal{W}^c$ is non empty,
take $q \in G(p) \cap \mathcal{W}^c$.
Then $G(q)=G(p)$ and hence $G(q)$ through $p\in \partial \mathcal{W}$, 
a contradiction.
Thus $G(p) \subset \partial \mathcal{W}$.
\end{proof}

As a corollary, we have the following
\begin{corollary}\label{fact:dev}
 An isometric immersion of $\H^2$ into $\H^3$ is a complete 
 developable surface in $\H^3$.
\end{corollary}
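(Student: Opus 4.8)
The plan is to combine the Gauss equation \eqref{eq:Gauss} with Proposition \ref{prop:Portnoy}, since the substantive work has already been carried out in establishing those two ingredients. The argument is essentially a one-line deduction at each step; the genuine content lies entirely in Proposition \ref{prop:Portnoy} (and the hyperbolic Massey lemma underpinning it), which I may assume.

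First I would unwind the definition of an isometric immersion. For $f : \H^2 \to \H^3$, the first fundamental form $g = f^{\ast}\inner{~}{~}$ coincides, by hypothesis, with the standard metric of $\H^2$, whose intrinsic Gaussian curvature is identically $-1$. Thus the Gaussian curvature $K$ of $(\Sigma, g) = (\H^2, g)$ satisfies $K \equiv -1$. Applying the Gauss equation $K = -1 + K_{\rm ext}$ then forces $K_{\rm ext} \equiv 0$, so $f$ is extrinsically flat.

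For the completeness hypothesis of Proposition \ref{prop:Portnoy}, I would note that $g$ is the complete metric of $\H^2$, so $(\Sigma, g)$ is a complete Riemannian $2$-manifold; hence $f$ is a complete extrinsically flat surface in $\H^3$. Finally I would invoke Proposition \ref{prop:Portnoy}, which asserts precisely that such a surface is developable, to conclude.

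There is no real obstacle to anticipate here: this is a corollary in the literal sense, and each of the three reductions (isometric $\Rightarrow$ $K\equiv-1$; Gauss equation $\Rightarrow$ extrinsically flat; completeness of $\H^2$ $\Rightarrow$ completeness of the surface) is immediate, after which Proposition \ref{prop:Portnoy} applies verbatim.
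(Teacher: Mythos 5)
Your proof is correct and matches the paper exactly: the paper notes after the Gauss equation \eqref{eq:Gauss} that an isometric immersion of $\H^2$ into $\H^3$ is a complete extrinsically flat surface, and then derives the corollary by applying Proposition \ref{prop:Portnoy}, which is precisely your three-step reduction.
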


%%%%%%%%%%%%%%%%%%%%%%%%%%%%%%%%%%
\subsection{Proof of Theorem \ref{thm:null}}
\hspace{2mm}

Since a ruled surface in $\H^3$ is a locus of 1-parameter family of geodesics,
it gives a curve in the space of oriented geodesics $\LH$.
Conversely, a curve in $\LH$ generates a ruled surface 
(it may have singularities) in $\H^3$.
Here, we shall investigate the curves given by developable surfaces in $\H^3$.
Let $(\m_1,\m_2)$ be a point in $\LH$ as in \eqref{eq:holcoord}.
Then it corresponds to a equivalence class $[\gamma]$, 
where $\gamma(t)$ is expressed as
\begin{equation}\label{eq:geodesic}
  \gamma(t)=\frac{1}{\left| 1+\m_1\bar{\m}_2 \right| }
  \left(
  \begin{array}{cc}
  e^{t}+e^{-t}|\mu_1|^2&
  e^t \mu_2 -e^{-t} \mu_1  \\
  e^t \bar{\mu}_2 -e^{-t} \bar{\mu}_1 &
  e^{t}|\mu_2|^2 + e^{-t}
  \end{array}
  \right) \in \Herm(2).
\end{equation}

A regular curve in a pseudo-Riemannian manifold 
is called {\it null\/} (resp.\ {\it causal\/}) if every tangent vector gives 
null (resp.\ timelike or null) direction.
Recall that the neutral metrics $\mathcal{G}^{\mathfrak{r}}$ 
and $\mathcal{G}^{\mathfrak{i}}$ are defined in \eqref{eq:metrics}.
Theorem~\ref{thm:null} is a direct conclusion of the following

\begin{proposition}\label{prop:representation}
For a regular curve $\alpha(s)=(\m_1(s),\m_2(s)) : 
\R \supset I \rightarrow \mathcal{U} \subset \LH$
which is null with respect to $\mathcal{G}^{\mathfrak{i}}$ and 
causal with respect to $\mathcal{G}^{\mathfrak{r}}$,
a map $f : I \times \R \rightarrow \H^3$ defined by
\begin{equation}\label{eq:representation}
  f(s,t)=
  \frac{1}{\left| 1+\m_1(s)\bar{\m}_2(s) \right| }
  \left(
  \begin{array}{cc}
  e^{t}+e^{-t}|\mu_1(s)|^2&
  e^t \mu_2(s) -e^{-t} \mu_1(s)\\
  e^t \bar{\mu}_2(s) -e^{-t} \bar{\mu}_1(s) &
  e^{t}|\mu_2(s)|^2 + e^{-t}
  \end{array}
 \right)
\end{equation}
is a developable surface.
Conversely, any developable surface generated by 
complete geodesics in $\H^3$ can be written locally in this manner.
\end{proposition}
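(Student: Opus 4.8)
The plan is to exploit the ruled structure of $f$ to turn both the flatness and the regularity requirements into pointwise conditions on $\alpha'$, and then to identify those conditions with $\mathcal{G}^{\mathfrak{i}}(\alpha',\alpha')=0$ and $\mathcal{G}^{\mathfrak{r}}(\alpha',\alpha')\le 0$. First I would write $f(s,t)=p(s)\cosh t+v(s)\sinh t$ with $p:=f(\cdot,0)$ and $v:=f_t(\cdot,0)$, so that each $t$-curve is exactly the geodesic \eqref{eq:geodesic}; in particular $f$ is ruled by complete geodesics and $g_{tt}=\inner{f_t}{f_t}=1$. Since each ruling is a geodesic of $\H^3$, its normal curvature vanishes, i.e.\ $I\!I(f_t,f_t)=0$; hence the matrix of $I\!I$ in the frame $(f_s,f_t)$ is $\left(\begin{smallmatrix}L&M\\M&0\end{smallmatrix}\right)$ and $K_{\rm ext}=-M^2/\det g$. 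Thus \emph{developability is equivalent to $M=I\!I(f_s,f_t)=0$}. Using the position vector $f$ as the timelike normal of $\H^3\subset\L^4$ together with \eqref{eq:cross}, the unit normal $\vect{\nu}$ is proportional to the $\L^4$-vector orthogonal to $f,f_s,f_t$, so $M$ is proportional to the Minkowski determinant $\det(f,f_s,f_t,f_{st})$.

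The key simplification is that this determinant is independent of $t$: a short wedge computation gives $f\wedge f_t=p\wedge v$ and $f_s\wedge f_{st}=p'\wedge v'$ (the $\cosh^2-\sinh^2$ cross terms cancel), whence $\det(f,f_s,f_t,f_{st})=\pm\det(p,v,p',v')$. It therefore suffices to evaluate $\det(p,v,p',v')$ once. Both this determinant (the action $X\mapsto aXa^{*}$ is a Lorentz transformation of determinant $1$ on $\L^4=\Herm(2)$) and the invariant metrics $\mathcal{G}^{\mathfrak{r}},\mathcal{G}^{\mathfrak{i}}$ are $\PSL(2,\C)$-invariant, so by transitivity I would verify the identities at a point where $\m_1=\m_2=0$, exactly as in the proof of Proposition \ref{prop:BG}. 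There $p=\sigma_0$, $v=\sigma_3$, and differentiating \eqref{eq:representation} at $t=0$ gives $p'=2h_\xi$, $v'=2h_\eta$ with $\m_1'=-\xi+\eta$, $\m_2'=\xi+\eta$. A direct computation then yields $\det(p,v,p',v')=4\,\Im(\xi\bar\eta)=-\tfrac12\mathcal{G}^{\mathfrak{i}}(\alpha',\alpha')$, so $M\equiv 0$ if and only if $\alpha$ is null with respect to $\mathcal{G}^{\mathfrak{i}}$. This settles the flatness half in both directions.

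For regularity I would compute the induced metric in the same frame. One finds $g_{st}=\inner{p'}{v}$, which vanishes at the base point $\m_1=\m_2=0$, and $\det g(t)=4\,|\,\xi\cosh t+\eta\sinh t\,|^2$, while $\mathcal{G}^{\mathfrak{r}}(\alpha',\alpha')=4(|\eta|^2-|\xi|^2)$ and regularity of $\alpha$ means $(\xi,\eta)\neq(0,0)$. Since $\cosh t>|\sinh t|$ for every $t$, the quantity $\xi\cosh t+\eta\sinh t$ is nonzero for all finite $t$ precisely when $|\xi|\ge|\eta|$, i.e.\ precisely when $\alpha$ is causal with respect to $\mathcal{G}^{\mathfrak{r}}$; in that case $\det g(t)>0$ for all $t$ and $f$ is an immersion along each complete ruling. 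This gives the forward implication (null $\Rightarrow$ flat, causal $\Rightarrow$ immersion). For the converse, a developable surface ruled by complete geodesics determines a curve $\alpha(s)=[\gamma_s]$ in $\LH$, which is regular because $f$ is an immersion and which locally takes values in the chart $\mathcal{U}$; reversing the two computations, $K_{\rm ext}\equiv 0$ forces $M=0$ hence the null condition, and regularity of $f$ along the whole ruling forces $|\xi|\ge|\eta|$ hence the causal condition. A base-point shift $t\mapsto t+t_0(s)$ relating the given parametrization to \eqref{eq:representation} only reparametrizes the rulings and does not affect the null/causal type of $\alpha$.

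I expect the main obstacle to be the regularity–causality equivalence rather than the flatness identity. The flatness computation reduces to a single $t$-independent determinant, but $g_{ss}$ genuinely depends on $t$, so one must control the sign of $\det g(t)$ along the entire complete geodesic ruling; the borderline case $|\xi|=|\eta|$ (both metrics null) is where $\det g(t)\to 0$ as $t\to\pm\infty$ while remaining positive at every finite $t$, which is exactly the ideal-cone degeneration studied later and must be kept on the causal side. Some care is also needed to justify the reduction to the base point $\m_1=\m_2=0$, namely that at each fixed $s$ one may apply an isometry normalizing the geodesic while leaving all the invariant quantities unchanged.
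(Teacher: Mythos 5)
Your argument is correct and establishes exactly the two equivalences on which the paper's proof rests --- flatness $\Leftrightarrow$ $\mathcal{G}^{\mathfrak{i}}$-nullity, and immersedness along the complete rulings $\Leftrightarrow$ $\mathcal{G}^{\mathfrak{r}}$-causality --- but by a genuinely different computational route. The paper stays in the chart $(\m_1,\m_2)$ and computes globally: it evaluates $\Lambda=|f_s\times f_t|^2$ in closed form as in \eqref{eq:regularity}, exhibiting it as a manifestly positive expression minus $\tfrac12\mathcal{G}^{\mathfrak{r}}(\alpha',\alpha')$, writes the unit normal \eqref{eq:unitnormal} explicitly, and derives $K_{\rm ext}=-\mathcal{G}^{\mathfrak{i}}(\alpha',\alpha')\big/\bigl(2\sqrt{\Lambda}^{\,3}\bigr)$ as in \eqref{eq:Kext}; for the converse it locates a singular point at the finite value $t=(\log|\m_1'|-\log|\m_2'|)/2$ via the $\sinh^2$ identity. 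You instead exploit the ruled structure ($I\!I(f_t,f_t)=0$, so $K_{\rm ext}=-M^2/\det g$), reduce $M\equiv0$ to the vanishing of the $t$-independent determinant $\det(p,v,p',v')$, and evaluate at the normalized point $\m_1=\m_2=0$ by $\PSL(2,\C)$-invariance --- precisely the device of the paper's own proof of Proposition \ref{prop:BG}, with which your dictionary ($\m_1'=-\xi+\eta$, $\m_2'=\xi+\eta$, $\mathcal{G}^{\mathfrak{r}}(\alpha',\alpha')=4(|\eta|^2-|\xi|^2)$, $\mathcal{G}^{\mathfrak{i}}(\alpha',\alpha')=-8\Im(\xi\bar\eta)$) agrees. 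Your $\det g=4|\xi\cosh t+\eta\sinh t|^2$ is the paper's $\Lambda$ at the normalized point, and your zero analysis is the invariant form of its $\sinh^2$ formula; you also correctly discharge the two genuine obligations you flag, since the shift $t\mapsto t+t_0(s)$ leaves $\det g$ and $M$ unchanged (because $I\!I(f_t,f_t)=0$) and isometries preserve every invariant involved. What your route buys is conceptual economy: no computation of $\vect{\nu}$, and the $t$-independence of the flatness condition becomes transparent; what the paper's route buys is global, quantitative identities such as \eqref{eq:Kext}, which measure $K_{\rm ext}$ against $\mathcal{G}^{\mathfrak{i}}(\alpha',\alpha')$ rather than only detecting its vanishing, at the cost of a heavier computation. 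One caveat to record: the biconditional ``$\xi\cosh t+\eta\sinh t\neq0$ for all finite $t$ precisely when $|\xi|\geq|\eta|$'' is false for arbitrary complex $\xi,\eta$ (for $\xi=1$, $\eta=2i$ there is no zero although $|\xi|<|\eta|$); the ``only if'' half requires $\Im(\xi\bar\eta)=0$, so that $\tanh t=-\xi/\eta$ becomes a real equation with right-hand side of modulus less than~$1$. Your proof is unaffected --- the forward direction uses only the unconditional ``if'' half, and in the converse you derive nullity from $K_{\rm ext}\equiv0$ before invoking causality --- but this hypothesis should be stated explicitly.
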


\begin{proof}
By \eqref{eq:geodesic}, a parametrization of the locus of $\alpha$ can be written by 
$f $ as in \eqref{eq:representation}.
First we shall prove that if $\alpha$ is null with respect to $\mathcal{G}^{\mathfrak{i}}$ 
and causal with respect to $\mathcal{G}^{\mathfrak{r}}$, then $f$ is an immersion.
Set 
\begin{equation}\label{eq:regularity}
  \Lambda(s,t)
  :=| f_s \times f_t |^2
  = \frac{e^{2t} |\m'_2|^2 + e^{-2t} |\m'_1|^2 }{\left| 1+\m_1\bar{\m}_2 \right|^2}
  -\frac{1}{2}\mathcal{G}^{\mathfrak{r}}(\alpha',\alpha'),
\end{equation}
where ${~}'=d/ds$, $f_s=\partial f/\partial s$, $f_t=\partial f/\partial t$
and $\times$ denotes the cross product of $\H^3$ as in \eqref{eq:cross}.
Thus we have $\Lambda(s,t)$ is positive
if $\mathcal{G}^{\mathfrak{r}}(\alpha',\alpha')$ is negative.
Consider the case $\mathcal{G}^{\mathfrak{r}}(\alpha',\alpha')=0$ at $s \in I$.
Since $\alpha$ is null with respect to $\mathcal{G}^{\mathfrak{i}}$, we have
$|\m'_1||\m'_2|=0$.
The regularity of $\alpha$ shows that either $\m'_1=0$ or $\m'_2=0$ occurs.
Without loss of generality, we may assume $\m'_1=0$.
Then the regularity of $\alpha$ means $\m'_2 \neq 0$, and then
$\Lambda(s,t)=e^{2t} |\m'_2|^2/\left| 1+\m_1\bar{\m}_2 \right|^2$
is positive. Thus $f$ is an immersion.

Next we shall show that $f$ is extrinsically flat.
The unit normal vector field $\vect{\nu}$ of $f$ is given by
\begin{equation}\label{eq:unitnormal}
  \vect{\nu}(s,t)
  =\frac{ f_s \times f_t }{| f_s \times f_t |}
  =\frac{i}{|1+\m_1\bar{\m}_2|^3 \sqrt{\Lambda(s,t)} }
  \left(
  \begin{array}{cc}
  a(s,t) & z(s,t)\\ -\bar{z}(s,t) & b(s,t)
  \end{array}
  \right),
\end{equation}
where
\[
  a(s,t) = 2i\Im \{ e^{t} (1+\m_1\bar{\m}_2) \bar{\m}_1 \m'_2 
  - e^{-t} (1+\m_2\bar{\m}_1) \bar{\m}_1 \m'_1\},
\]
\[
  b(s,t) =-2i\Im \{ e^{t} (1+\m_1\bar{\m}_2) \bar{\m}_2 \m'_2 
  - e^{-t} (1+\m_2\bar{\m}_1) \bar{\m}_2 \m'_1\},
\]
\[
  z(s,t) 
  = -e^{t}\{ (1+\m_1\bar{\m}_2) \m'_2
           + (1+\m_2\bar{\m}_1) \m_1\m_2\bar{\m}'_2 \}
           +e^{-t}\{ (1+\m_2\bar{\m}_1) \m'_1
           + (1+\m_1\bar{\m}_2) \m_1\m_2\bar{\m}'_1 \}.
\] 
Since
\[
  K_{\rm ext} 
  =\frac{\inner{f_s}{\vect{\nu}_s}\inner{f_t}{\vect{\nu}_t}
      -\inner{f_s}{\vect{\nu}_t}\inner{f_t}{\vect{\nu}_s}}{
       \inner{f_s}{f_s}\inner{f_t}{f_t}-\inner{f_s}{f_t}^2}
  \qquad \text{and} \qquad
  \mathcal{G}^{\mathfrak{i}}(\alpha', \alpha') 
  = \Im\frac{4\m'_1\bar{\m}'_2}{(1+\m_1\bar{\m}_2)^2},
\]
we have
\begin{equation}\label{eq:Kext}
  K_{\rm ext}
  =
  \frac{i}{ \sqrt{\Lambda(s,t)}^3 }
  \left\{\frac{\m'_1\bar{\m}'_2}{
  (1+\m_1\bar{\m}_2)^2}-\frac{\m'_2\bar{\m}'_1}{(1+\m_2\bar{\m}_1)^2} \right\}
  =
  \frac{-1}{2 \sqrt{\Lambda(s,t)}^3 }
  \mathcal{G}^{\mathfrak{i}}(\alpha',\alpha').
\end{equation}
Therefore $\mathcal{G}^{\mathfrak{i}}(\alpha',\alpha') =0$ 
if and only if $K_{\rm ext} =0$.

Conversely, for a ruled surface $\hat{f} : \Sigma \rightarrow \H^3$,
there exists a $1$-parameter family $\alpha=\alpha(s)$ of geodesics
such that its locus coincides with the given surface $\hat{f}$.
Using a suitable isometry, 
we may assume that the image of $\alpha$ is included 
in $\mathcal{U}$ in \eqref{eq:nbd}, that is,
\[
  \alpha :  \R \supset I \ni s \longmapsto  (\m_1(s),\m_2(s)) \in \mathcal{U} \subset \LH.
\]
Thus $\hat{f}$ is given by $f$ as in \eqref{eq:representation} locally.
We shall prove that, if the ruled surface $\hat{f}$ is developable, 
$\alpha$ is a regular curve
which is null with respect to $\mathcal{G}^{\mathfrak{i}}$ and 
causal with respect to $\mathcal{G}^{\mathfrak{r}}$.
If there exists a point such that $\alpha'=0$, $\hat{f}$ 
is not an immersion because of \eqref{eq:regularity}.
Thus $\alpha$ is a regular curve.
Moreover $\alpha$ is a null with respect to $\mathcal{G}^{\mathfrak{i}}$ 
by \eqref{eq:Kext}.
Then we shall prove $\alpha$ is causal with respect to $\mathcal{G}^{\mathfrak{r}}$.
If $\mathcal{G}^{\mathfrak{r}}(\alpha',\alpha')>0$,
\[
  \mathcal{G}^{\mathfrak{r}}(\alpha',\alpha')
  =\Re\frac{4\m'_1\bar{\m}'_2}{(1+\m_1\bar{\m}_2)^2}
  =\frac{4|\m'_1||\m'_2|}{|1+\m_1\bar{\m}_2|^2},
\]
holds since $\mathcal{G}^{\mathfrak{i}}(\alpha',\alpha')=0$.
Then we have
\begin{eqnarray*}
  \Lambda(s,t)=
  \frac{4|\m'_1||\m'_2|}{\left| 1+\m_1\bar{\m}_2 \right|^2}
  \sinh^2 \left( t+\frac{1}{2}\log\frac{|\m'_2|}{|\m'_1|} \right),
\end{eqnarray*}
and hence $\hat{f}$ has a singular point at $t=(\log|\m'_1|-\log|\m'_2|)/2$, 
a contradiction.
\end{proof}

%%%%%%%%%%%%%%%%%%%%%%%%%%%%%%%%%%
\subsection{Examples}
\hspace{2mm}

Nomizu \cite{Nomizu} constructed fundamental examples of 
complete developable surfaces in $\H^3$ 
(cf. Figure~\ref{fig:Nomizu} in the introduction).

\begin{example}[Hyperbolic $2$-cylinders, {\cite[Example 1]{Nomizu}}]
\label{ex:1}
Let $\D$ be the unit disc in $\C$.
For a regular curve $\zeta(s) : \R \rightarrow \D$, set
\[
  \alpha_1(s)=(-\zeta(s),\zeta(s)). 
\]
Then $\alpha_1$ determines a regular curve in 
$\LH=(\hat{\C}\times\hat{\C})\setminus\hat{\Delta}$,
which is null with respect to $\mathcal{G}^{\mathfrak{i}}$ 
and causal with respect to $\mathcal{G}^{\mathfrak{r}}$.
Thus by Theorem \ref{thm:null}, the locus of $\alpha_1$ is a developable surface,
called {\it hyperbolic $2$-cylinder\/}.
Figure~\ref{fig:Nomizu} (B) shows an example of $\zeta(s)=e^{is}/3$.
\end{example}

\begin{example}[Ideal cones, {\cite[Example 2]{Nomizu}}]
\label{ex:2}
For a regular curve $\m(s) : \R \rightarrow \C$,
set
\[
  \alpha_2(s)=(\m(s),0). 
\]
Then $\alpha_2$ determines a regular curve in 
$\LH=(\hat{\C}\times\hat{\C})\setminus\hat{\Delta}$,
which is null with respect to both $\mathcal{G}^{\mathfrak{i}}$ and 
$\mathcal{G}^{\mathfrak{r}}$.
Thus by Theorem \ref{thm:null}, the locus of $\alpha_2$ is a developable surface.
Figure~\ref{fig:Nomizu} (C) shows an example of $\m(s)=e^{is}/2$.
We will see this example more precisely in Section \ref{sec:exponential}.
\end{example}

\begin{example}[Rectifying developables of helices, {\cite[Example 3]{Nomizu}}]
\label{ex:3}
For constants $\kappa,\, \tau \in \R \setminus \{0\}$, set
$a_{\pm}:=\sqrt{(\kappa \pm 1)^2+\tau^2}$,
$A_{\pm}:=\sqrt{\pm(1-\kappa^2-\tau^2)+a_{+}a_{-} }$
and $\alpha_3 : \R \rightarrow \C^2$ as
\begin{multline*}
  \alpha_3(s)= \left(
  \kappa \frac{4\sqrt{2}\sqrt{\kappa^2+\tau^2}i+4\tau A_{-}}{
  (\sqrt{2}\sqrt{\kappa^2+\tau^2}i+4\tau A_{+})( a_{+} + a_{-} )^2+4\kappa A_{-}}
  \exp \left( \frac{ A_{+}+iA_{-}  }{ \sqrt{2} }s \right) ,\right.\\
  \left.
  \frac{1}{\kappa}
  \frac{(\sqrt{2}\sqrt{\kappa^2+\tau^2}-\tau A_{+})( a_{+} + a_{-} )^2-4\kappa A_{-}}{
  4\sqrt{2}\sqrt{\kappa^2+\tau^2}i+4\tau A_{-}- ( a_{+} + a_{-} )^2 A_{+}}
  \exp \left( \frac{ -A_{+}+iA_{-}  }{ \sqrt{2} }s \right)
  \right).
\end{multline*}
Then $\alpha_3$ determines a regular curve in 
$\LH=(\hat{\C}\times\hat{\C})\setminus\hat{\Delta}$,
which is null with respect to $\mathcal{G}^{\mathfrak{i}}$ 
and causal with respect to $\mathcal{G}^{\mathfrak{r}}$.
Thus by Theorem \ref{thm:null}, the locus of $\alpha_3$ is a developable surface.
In fact, this is a rectifying developable \cite{Nomizu}
of the helix of constant curvature $\kappa$ and torsion $\tau$ in $\H^3$.
Figure~\ref{fig:Nomizu} (D) shows an example of $\kappa=\tau=1$.
\end{example}

%%%%%%%%%%%%%%%%%%%%%%%%%%%%%%%%%%
\section{Ideal Cones and Behavior of the Mean Curvature}
\label{sec:exponential}
In this section, we shall prove Theorem~\ref{thm:exponential} in the introduction.
First, we define ``ideal cones'', determine the corresponding curves in $\LH$ 
and investigate behavior of their mean curvature.
Next, we introduce the notion of developable surfaces {\it of exponential type\/} 
in $\H^3$. Finally, we prove Theorem~\ref{thm:exponential}.

\subsection{Null curves and ideal cones}
\label{subsec:3_idealcones}
\hspace{2mm}

\begin{definition}[Ideal cones]
\label{def:idealcones}
We call a complete developable surface in $\H^3$ an {\it ideal cone\/},
if it is a locus of 1-parameter family of geodesics
sharing one side end as a same point in the ideal boundary.
The shared point is called {\it vertex\/}.
\end{definition}

\begin{proposition}
\label{prop:vertex}
An ideal cone gives a curve in $\LH$ which is null with respect to both
$\mathcal{G}^{\mathfrak{i}}$ and $\mathcal{G}^{\mathfrak{r}}$. 
Conversely, if the locus of a curve in $\LH$ which is null with respect to 
both $\mathcal{G}^{\mathfrak{i}}$ and $\mathcal{G}^{\mathfrak{r}}$
is complete, then the locus is an ideal cone.
\end{proposition}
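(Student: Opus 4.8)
The plan is to reduce both implications to a single pointwise identity for the generating curve, and then to separate them by a connectedness argument. Writing $\alpha(s)=(\mu_1(s),\mu_2(s))$ and putting
\[
  Z(s):=\frac{4\,\mu_1'(s)\,\bar{\mu}_2'(s)}{\bigl(1+\mu_1(s)\bar{\mu}_2(s)\bigr)^2},
\]
the computations already recorded in the proof of Proposition~\ref{prop:representation} give $\mathcal{G}^{\mathfrak{r}}(\alpha',\alpha')=\Re Z$ and $\mathcal{G}^{\mathfrak{i}}(\alpha',\alpha')=\Im Z$. Hence $\alpha$ is null with respect to both $\mathcal{G}^{\mathfrak{r}}$ and $\mathcal{G}^{\mathfrak{i}}$ exactly when $Z\equiv 0$, i.e.\ when $\mu_1'\bar{\mu}_2'\equiv 0$; since $\alpha$ is regular, this means that at each parameter exactly one of $\mu_1'$, $\mu_2'$ vanishes. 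This equivalence is the backbone of the whole argument.

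For the first assertion I would start from an ideal cone, whose generating geodesics all share one ideal endpoint, the vertex. Since the $\PSL(2,\C)$-action \eqref{eq:hol-act} preserves both $\mathcal{G}^{\mathfrak{r}}$ and $\mathcal{G}^{\mathfrak{i}}$, it preserves the property of being null with respect to both; I may therefore apply an isometry to bring a neighborhood of any point of the curve into the chart $\mathcal{U}$ of \eqref{eq:nbd}. Recalling from \eqref{eq:holcoord} that $\mu_1=-\gamma_-$ and $\mu_2=1/\bar{\gamma}_+$, the condition that all geodesics share a common $\gamma_-$ becomes ``$\mu_1$ is constant'', while sharing a common $\gamma_+$ becomes ``$\mu_2$ is constant''. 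In either case $\mu_1'\bar{\mu}_2'=0$, so $Z\equiv 0$; as the surface is a developable immersion the curve is regular (Proposition~\ref{prop:representation}), and it is therefore genuinely null with respect to both metrics.

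For the converse I assume $\alpha$ is null with respect to both metrics, so $Z\equiv 0$. By regularity the two open sets $U_1:=\{\,s\mid\mu_1'(s)\neq 0\,\}$ and $U_2:=\{\,s\mid\mu_2'(s)\neq 0\,\}$ are disjoint and together exhaust the parameter interval $I$, so connectedness of $I$ forces one of them to be empty. Thus either $\mu_1$ or $\mu_2$ is constant along the entire curve, which by the coordinate meaning of $\mu_1$, $\mu_2$ says precisely that all the generating geodesics share one point of the ideal boundary. Since $\mathcal{G}^{\mathfrak{r}}(\alpha',\alpha')=0$ is (in particular) causal and $\mathcal{G}^{\mathfrak{i}}(\alpha',\alpha')=0$ is null, Proposition~\ref{prop:representation} shows that the locus $f$ is an immersed developable surface; combining this with the completeness hypothesis and the common-endpoint property just obtained, the locus is a complete developable surface whose geodesics emanate from a single ideal point, i.e.\ an ideal cone.

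I expect the only genuine subtlety to lie in this connectedness step: a priori a doubly null curve could run along a stretch on which $\gamma_-$ is frozen and then along a stretch on which $\gamma_+$ is frozen, and it is precisely the disjointness of $U_1$, $U_2$ together with the connectedness of $I$ that rules out such switching and pins down a single vertex. The completeness hypothesis plays only a light role: it is needed to meet the ``complete developable surface'' clause in the definition of an ideal cone, not to produce the common endpoint, and the isometry bookkeeping in the first implication is routine once one invokes the invariance of the two metrics.
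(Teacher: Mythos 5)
Your proposal is correct and follows essentially the same route as the paper: reduce both directions to the vanishing of $\mathcal{G}(\alpha',\alpha')=4\mu_1'\bar{\mu}_2'/(1+\mu_1\bar{\mu}_2)^2$, use regularity to force one of $\mu_1'$, $\mu_2'$ to vanish identically, and translate constancy of $\mu_1$ or $\mu_2$ into a shared ideal endpoint (the paper normalizes the vertex to $\infty$ so that $\mu_2\equiv 0$, rather than localizing into the chart as you do). Your explicit connectedness argument with the disjoint open sets $U_1$, $U_2$ is exactly the justification behind the paper's terse assertion that $\mu_1'$ or $\mu_2'$ ``vanishes identically,'' so this is a welcome filling-in of detail rather than a different method.
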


\begin{proof}
Without loss of generality, we may assume the vertex of the ideal cone 
is $\infty \in \partial \H^3$.
Then the curve 
$\alpha(s)=( \m_1(s), \m_2(s) ) \in (\hat{\C}\times\hat{\C})\setminus\hat{\Delta} = \LH$ 
given by the ideal cone satisfies $\m_2(s)=0$. Hence 
$\mathcal{G}^{\mathfrak{r}}(\alpha',\alpha')
=\mathcal{G}^{\mathfrak{i}}(\alpha',\alpha')=0$ holds.
Conversely, a curve $\alpha(s)=( \m_1(s), \m_2(s) )$ in $\LH$ 
is null with respect to $\mathcal{G}^{\mathfrak{i}}$ 
if and only if 
$\mathcal{G}(\alpha', \alpha')$ is always real.
Moreover if $\alpha$ is null with respect to $\mathcal{G}^{\mathfrak{r}}$, we have
\begin{equation}\label{eq:EQcondition}
  \mathcal{G}(\alpha', \alpha')
  =\frac{\m'_1(s)\bar{\m}'_2(s)}{(1+\m_1(s)\bar{\m}_2(s))^2} 
  = 0,
\end{equation}
for all $s$. By the regularity of $\alpha$, 
\eqref{eq:EQcondition} holds if and only if either $\m'_1(s)$ vanishes identically 
or so does $\m'_2(s)$.
This means the locus of $\alpha$ is a ruled surface which 
is asymptotic to a point in the ideal boundary.
\end{proof}

\begin{remark}
By Proposition \ref{prop:vertex},
it follows that a complete {\it ruled\/} surface 
which is a locus of 1-parameter family of geodesics
sharing one side end as a same point in the ideal boundary 
is necessarily developable, that is, an ideal cone.
If the vertex is $\infty \in \partial \H^3$, the shape of ideal cone is 
a cylinder over a plane curve in the upper half space $\R^3_+$ 
(cf.\ Figure \ref{fig:ballupper}).
\end{remark}

\begin{figure}[htb]
 \begin{center}
 \begin{tabular}{c@{\hspace{30mm}}c}
  \resizebox{3cm}{!}{\includegraphics{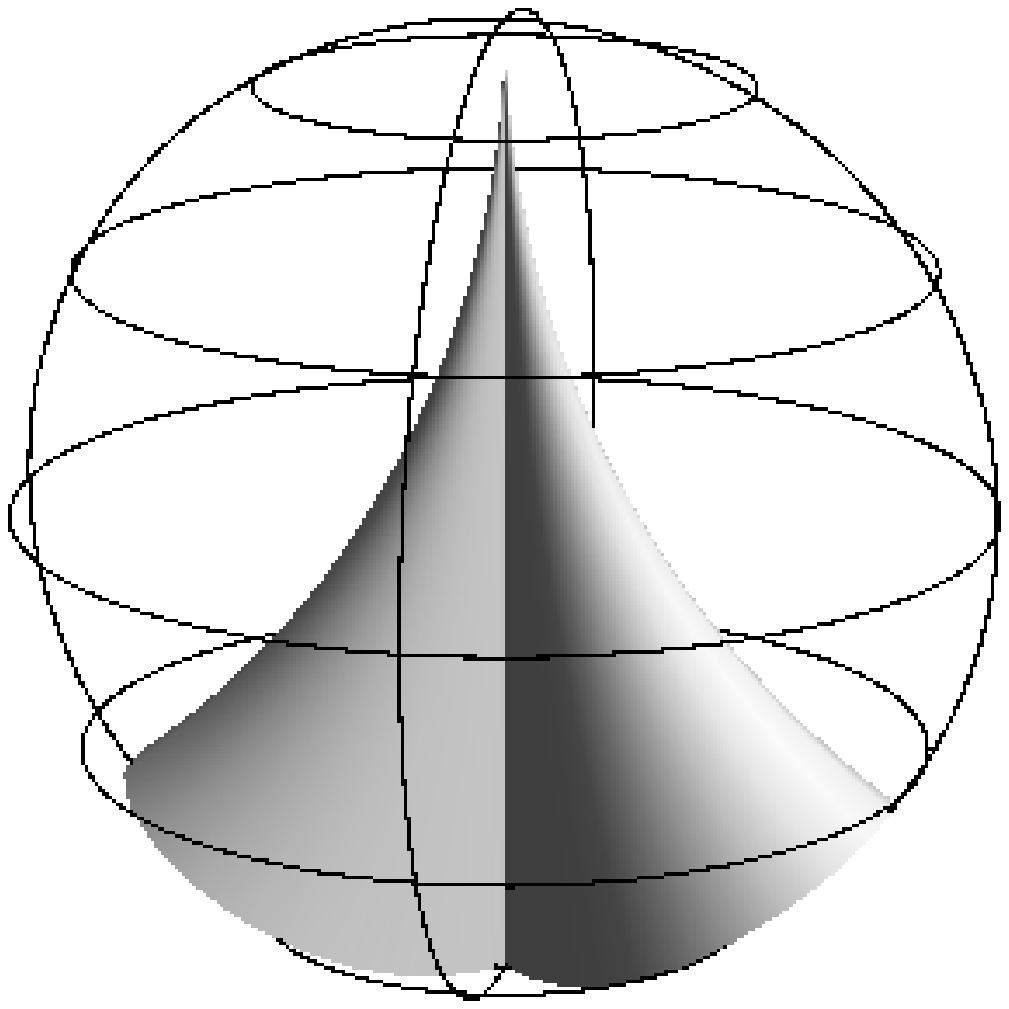}} &
  \resizebox{3cm}{!}{\includegraphics{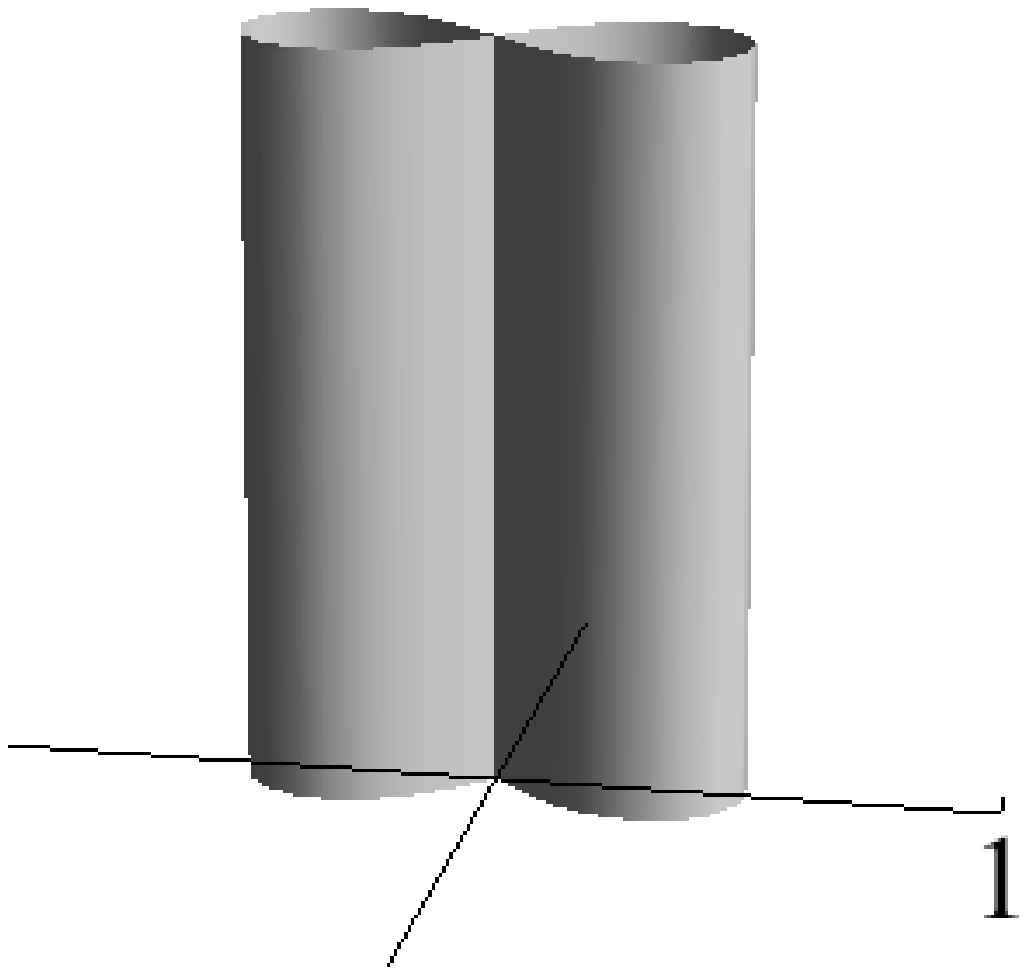}}\\
  {\footnotesize (a) in the Poincar\'e ball model} &
  {\footnotesize (b) in the upper half space model}
 \end{tabular}
 \end{center}
 \caption{An ideal cone whose vertex at $\infty$.}
 \label{fig:ballupper}
\end{figure}

Now we shall investigate behavior of the mean curvature of ideal cones.

\begin{proposition}
\label{prop:idealcone}
For an ideal cone $f$, 
let $\gamma$ be an asymptotic curve of the non umbilic point set of $f$ 
such that $\gamma_+$ is the vertex of $f$,
and let $t$ be the arc length parameter of $\gamma$.
Then the mean curvature $H$ of $f$ is proportional to $e^t$ on $\gamma$.
\end{proposition}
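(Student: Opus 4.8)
The plan is to reduce everything to the explicit mean-curvature formula already derived for developable surfaces and to the normal-form description of ideal cones from Proposition \ref{prop:vertex}. By the proof of that proposition, after applying a suitable isometry I may assume the vertex is $\infty \in \partial \H^3$, so that the generating curve is $\alpha(s)=(\m_1(s),0)$ with $\m_2 \equiv 0$. With this normalization the representation formula \eqref{eq:representation} simplifies dramatically, and the hypothesis $\gamma_+$ equal to the vertex pins down the orientation of the geodesics: I first want to check whether ``$\gamma_+$ is the vertex'' forces the $e^t$ branch rather than the $e^{-t}$ branch, since along a single ruling $t\to+\infty$ should correspond to running toward $\gamma_+$. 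This sign-tracking is where I expect the only real subtlety to lie.

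The main computational input is Lemma \ref{lem:Massey}, which gives $(1/H)_{tt}=1/H$ along any asymptotic curve in the non-umbilic set, whence $1/H = a\cosh t + b\sinh t$ as recorded in \eqref{eq:meancurva}. The whole problem therefore reduces to showing that, for an ideal cone and the asymptotic curve $\gamma$ whose forward endpoint is the vertex, the constants satisfy $a=b$, so that $1/H = a\,e^{-t}$ up to sign and hence $H \propto e^{t}$. First I would compute $H$ directly along a ruling using the normalized formula: with $\m_2\equiv 0$ the immersion \eqref{eq:representation} becomes
\[
  f(s,t)=\frac{1}{\,1\,}\left(
  \begin{array}{cc}
  e^{t}+e^{-t}|\m_1(s)|^2 & -e^{-t}\m_1(s)\\
  -e^{-t}\bar{\m}_1(s) & e^{-t}
  \end{array}\right),
\]
since $1+\m_1\bar\m_2=1$, and from here the quantity $\Lambda(s,t)=|f_s\times f_t|^2$ in \eqref{eq:regularity} collapses to $e^{-2t}|\m_1'|^2$ because $\mathcal{G}^{\mathfrak r}(\alpha',\alpha')=0$ for an ideal cone. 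Feeding this into the general expression for $H$ (via the second fundamental form already set up in the proof of Proposition \ref{prop:representation}) should yield $H$ as an explicit constant times $e^{t}$ directly.

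Concretely, the cleanest route is probably to bypass the constants $a,b$ altogether: compute $H = \tfrac12\trace A$ from $f$ and $\vect\nu$ in \eqref{eq:unitnormal} under the normalization $\m_2\equiv 0$. Since $K_{\rm ext}=0$, one principal curvature vanishes and $H$ equals half the nonzero principal curvature; the shape operator acts on the $s$-direction only, so $H$ is essentially $\inner{f_s}{\vect\nu_s}/\inner{f_s}{f_s}$ evaluated along the ruling. The $t$-dependence of this ratio is governed entirely by the powers of $e^{\pm t}$ carried by $f_s$, $\vect\nu$, and $\sqrt{\Lambda}=e^{-t}|\m_1'|$, and a short bookkeeping of these exponentials gives $H \propto e^{t}$. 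The one place to be careful is confirming that the arc-length parameter $t$ of $\gamma$ coincides (not merely up to a reflection $t\mapsto -t$) with the geodesic parameter in \eqref{eq:representation} oriented so that $t\to+\infty$ reaches $\gamma_+=\infty$; I would verify this by checking that $f(s,t)\to \infty\in\partial\H^3$ as $t\to+\infty$ in the normalized picture (in the upper half-space model the ruling becomes a vertical line escaping to the vertex). Once that orientation is fixed, the $e^{t}$ conclusion — rather than $e^{-t}$ — follows, and the proof closes.
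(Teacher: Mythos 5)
Your proposal is correct and follows essentially the same route as the paper: normalize the vertex to $\infty$ so that $\m_2\equiv 0$, specialize the representation formula \eqref{eq:representation}, and read off $H$ from the explicit first and second fundamental forms (the paper gets $g=e^{-2t}|\m'|^2ds^2+dt^2$, $I\!I=-e^{-t}\kappa_E(s)\,ds^2$, hence $H=-e^{t}\kappa_E(s)/2$), with your orientation check that $t\to+\infty$ runs to the vertex matching the paper's observation that $(\Psi\circ f)(s,t)=(\m(s),e^t)$ in the upper half-space model. The only cosmetic difference is that the paper additionally reparametrizes $\m$ by Euclidean arc length on the horosphere $\{r=1\}$ to identify the proportionality constant as the Euclidean curvature $\kappa_E$, which your exponent-bookkeeping version does not need.
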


\begin{proof}
Without loss of generality, 
we may assume the vertex of $f$ is $\infty \in \partial \H^3$.
Then the curve $\alpha$ in $\LH$ corresponding to $f$ is given by 
$\alpha(s)=(\m(s),0)$ on $\mathcal{U} \subset \LH$.
By the representation formula \eqref{eq:representation},
$f$ can be written as
\begin{equation}\label{eq:repre-IC}
  f(s,t)=
  \left(
  \begin{array}{cc}
  e^{t}+e^{-t}|\m(s)|^2&
  -e^{-t} \m(s)  \\
  -e^{-t} \bar{\m}(s) &
  e^{-t}
  \end{array}
  \right).
\end{equation}
Then the induced metric $g=f^*\inner{~}{~}$ is
\begin{equation}\label{eq:ind-met} 
  g=e^{-2t} |\m'|^2ds^2+dt^2. 
\end{equation}

Now we shall see that $\m(s)$ can be considered as an 
Euclidean plane curve as follows.
By the isometry $\Psi : \H^3 \rightarrow \R^3_+$ as in \eqref{eq:isometry},
$f$ is transferred to $(\Psi \circ f)(s,t) =( \m(s), e^t ) \in \R^3_+$, that is,
the cylinder over the plane curve $\m(s)\in \C$.
Set $\Omega := \{(w,1) \,|\, w \in \C\} \subset \R^3_+$,
a complete flat surface in $\R^3_+$ so-called the {\it horosphere\/} 
through $(0,1)$ and $\infty$.
Thus $\Omega$ can be considered as the Euclidean plane.
Then the intersection of $f$ and $\Omega$ is parametrized by
$ (\Psi \circ f)(s,0) = ( \m(s), 1 )$.
Thus we can consider $\m$ as a curve in the Euclidean plane $\Omega$.

If we take the arc length parameter $s$ of the curve $\m$ in $\Omega$, 
the induced metric $g$ in \eqref{eq:ind-met} is written as $g=e^{-2t} ds^2+dt^2$.
Since the unit normal vector field $\vect{\nu}$ of $f$ can be expressed by
\[
  \vect{\nu}(s,t)=\left(
  \begin{array}{cc} 
    2\Im(\bar{\m}\m' ) & i\m'\\ -i\bar{\m}' & 0 
  \end{array}\right),
\]
the second fundamental form $I\!I$ of $f$ is written as
$I\!I = e^{-t} \Im(\m' \bar{\m}'')ds^2 = - e^{-t} \kappa_{E}(s) ds^2,$
where $\kappa_{E}$ is the curvature of $\m$ in the Euclidean plane $\Omega$.
Therefore the mean curvature $H$ of $f$ is given by
$H(s,t) = -e^t \kappa_{E}(s) / 2.$
\end{proof}

%%%%%%%%%%%%%%%%%%%%%%%%%%%%%%%%%%
\subsection{Developable surfaces of exponential type}
\label{subsec:3_exptype}
\hspace{2mm}

Here we shall investigate behavior of the mean curvature 
of {\it complete\/} developable surfaces.
For a complete developable surface $f : \Sigma \rightarrow \H^3$,
let $p \in \Sigma$ be a non umbilic point. 
Then there exists a unique asymptotic curve $\gamma$ 
through $p$ which is a geodesic in $\H^3$.
By hyperbolic Massey's lemma (Lemma~\ref{lem:Massey}), it holds that 
\[
  \frac{1}{H}=P \cosh t + Q \sinh t  
\]
on $\gamma$ (see \eqref{eq:meancurva}), 
where $P$ and $Q$ are constants and $t$ is the arc length parameter of $\gamma$.
Without loss of generality, we may assume $P$ is positive. Then
\begin{equation*}
  \frac{1}{H}= 
  \begin{cases}
    \sqrt{P^2 -Q^2}\cosh\left(t +\dfrac{1}{2}\log \dfrac{P+Q}{P-Q}\right) 
      & \qquad(\text{if}~ P>|Q|), \\
    P e^{\pm t} 
      & \qquad(\text{if}~ P=|Q|),\\
    \sqrt{Q^2 -P^2}\sinh \left(t +\dfrac{1}{2}\log \dfrac{Q+P}{Q-P}\right) 
      & \qquad(\text{if}~ P<|Q|). 
  \end{cases}
\end{equation*}
Completeness of $f$ implies that $t$ varies from $-\infty$ to $\infty$. 
But in the third case,
the mean curvature diverges at some $t \in \R$, a contradiction.
Hence only the first and the second cases can happen,
that is, the mean curvature $H$ of a complete developable surface 
is proportional to exponential function or hyperbolic secant function 
on each asymptotic curves with respect to the arc length parameter.

\begin{definition}[Developable surfaces of exponential type]
\label{def:exptype}
A complete developable surface is said to be {\it of exponential type\/}
if it is not totally umbilic and the mean curvature is proportional to $e^{\pm t}$ 
on each asymptotic curves in the set of non umbilic points, 
where $t$ is the arc length parameter of the asymptotic curve.
\end{definition}

Proposition \ref{prop:idealcone} says that 
non totally umbilic ideal cones are developable surfaces of exponential type.

%%%%%%%%%%%%%%%%%%%%%%%%%%%%%%%%%%
\subsection{Proof of Theorem \ref{thm:exponential}}
\label{subsec:3_sufficient}
\hspace{2mm}

\begin{definition}[Asymptotics of geodesics]
\label{def:asymp-geod}
Two unit speed geodesics $\gamma_1$, $\gamma_2$ in $\H^3$ are said to be 
{\it asymptotic\/} if 
$ \left\{  d\left( \gamma_1(t),\gamma_2(t)  \right) ~|~ t > 0 \right\} $
is bounded from above, where $d$ denotes the hyperbolic distance.
\end{definition}
For $(p,v)$, $(q,w) \in U\H^3$, it is known that the geodesics
\[
  \gamma_{p,v}(t)=p\cosh t+v\sinh t ,\qquad \gamma_{q,w}(t)=q\cosh t+w\sinh t
\]
are asymptotic if and only if
$ \inner{p+v}{q+w}=0$ holds.

Theorem~\ref{thm:exponential} in the introduction is proved directly by the following

\begin{proposition}\label{prop:exp-IC}
A developable surface of exponential type
whose umbilic point set has no interior is an ideal cone.
That is, asymptotic curves of such a surface are asymptotic to each other.
\end{proposition}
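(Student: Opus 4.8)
The plan is to realize a piece of the surface through the representation formula (Proposition~\ref{prop:representation}), to show that exponential type forces, at every parameter value, the vanishing of one of $\mu_1'$ or $\mu_2'$, and then to promote this pointwise dichotomy to a global one by a connectedness argument; the global conclusion is exactly the condition that the generating curve be null with respect to both $\mathcal{G}^{\mathfrak{r}}$ and $\mathcal{G}^{\mathfrak{i}}$, i.e. an ideal cone by Proposition~\ref{prop:vertex}.

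First I would fix the setup. After a suitable isometry, a piece of the surface is $f(s,t)$ as in \eqref{eq:representation} for a regular curve $\alpha(s)=(\mu_1(s),\mu_2(s))$ that is null with respect to $\mathcal{G}^{\mathfrak{i}}$ and causal with respect to $\mathcal{G}^{\mathfrak{r}}$. Nullity in $\mathcal{G}^{\mathfrak{i}}$ makes $\mu_1'\bar\mu_2'/(1+\mu_1\bar\mu_2)^2$ real, and causality in $\mathcal{G}^{\mathfrak{r}}$ then gives $\mathcal{G}^{\mathfrak{r}}(\alpha',\alpha')=-4|\mu_1'||\mu_2'|/|1+\mu_1\bar\mu_2|^2\le 0$. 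Substituting into \eqref{eq:regularity} turns $\Lambda$ into a perfect square,
\[
  \Lambda(s,t)=\frac{\bigl(e^{t}|\mu_2'|+e^{-t}|\mu_1'|\bigr)^2}{|1+\mu_1\bar\mu_2|^2},
\]
so $\sqrt{\Lambda}$ is precisely a combination of $e^{t}$ and $e^{-t}$ with nonnegative coefficients $|\mu_2'|/|1+\mu_1\bar\mu_2|$ and $|\mu_1'|/|1+\mu_1\bar\mu_2|$.

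The computational heart is to show that the mean curvature has the shape $H(s,t)=c(s)/\sqrt{\Lambda(s,t)}$ for a continuous function $c$. Since the $t$-curves are unit-speed asymptotic geodesics we have $I\!I_{tt}=0$ and $g_{tt}=1$, whence $H=(I\!I_{ss}-2g_{st}I\!I_{st})/(2\Lambda)$; expanding the second fundamental form with $\vect{\nu}$ from \eqref{eq:unitnormal} in powers of $e^{t}$ and comparing with the perfect-square form of $\Lambda$ shows that the numerator $I\!I_{ss}-2g_{st}I\!I_{st}$ is proportional to $\sqrt{\Lambda}$ with a factor independent of $t$. (This is the hyperbolic analogue of the Euclidean fact that $1/H$ grows linearly with the distance to the edge of regression along a ruling of a developable, and it is consistent with Lemma~\ref{lem:Massey} and Proposition~\ref{prop:idealcone}.) I expect this expansion to be the main obstacle, as it is the one genuinely lengthy calculation. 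Granting it, umbilic points are exactly $\{c(s)=0\}$, and on each ruling with $c(s)\neq 0$,
\[
  \frac{1}{H}(s,t)=\frac{e^{t}|\mu_2'(s)|+e^{-t}|\mu_1'(s)|}{c(s)\,|1+\mu_1\bar\mu_2|}.
\]

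Finally I would run the global argument. By the proof of Proposition~\ref{prop:Portnoy} each ruling is entirely umbilic or entirely non-umbilic, so the non-umbilic parameter set $\{c\neq 0\}$ is open, and since $\mathcal{W}$ has empty interior it is dense in the interval $I$. Exponential type says $1/H$ is proportional to $e^{t}$ or $e^{-t}$ on each non-umbilic asymptotic curve, i.e. one of the two coefficients above vanishes; hence $\mu_1'(s)=0$ or $\mu_2'(s)=0$ for every $s\in\{c\neq 0\}$. As $\{\mu_1'=0\}\cup\{\mu_2'=0\}$ is closed and contains this dense set, it equals all of $I$; regularity of $\alpha$ rules out $\mu_1'(s)=\mu_2'(s)=0$ simultaneously, so $\{\mu_1'=0\}$ and $\{\mu_2'=0\}$ are disjoint closed sets covering the interval $I$. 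By connectedness one is empty, giving $\mu_1'\equiv 0$ or $\mu_2'\equiv 0$: all rulings share a common ideal endpoint. Since the surface is complete, Proposition~\ref{prop:vertex} identifies the locus as an ideal cone, and continuity of the shared endpoint together with connectedness of $\Sigma$ makes the vertex globally well defined, so the generating geodesics are asymptotic to one another.
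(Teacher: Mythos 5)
Your strategy is sound and genuinely different from the paper's. The paper does not return to $\LH$ at all for this proposition: it works directly on the surface, first building a global adapted coordinate system with $g=g_{11}\,ds^2+dt^2$ and $I\!I=e^t\delta(s)g_{11}\,ds^2$ (Claim 1), then using the Codazzi equation~\eqref{eq:Codazzi} and the Frenet--Serret apparatus in $\H^3$ to show that the base curve projects to a horocycle in $\H^2$, that the ruling direction is $v=(\vect{n}+\delta\vect{b})/\sqrt{1+\delta^2}$, and that $\kappa=\sqrt{1+\delta^2}$, $\tau=\delta'/(1+\delta^2)$ (Claim 2); the punchline (Claim 3) is that the endpoint null vector $c(s)+(\vect{n}+\delta\vect{b})/\kappa$ has vanishing derivative, so its Lorentzian inner product with its value at $s_0$ is identically zero, which by the criterion after Definition~\ref{def:asymp-geod} makes all rulings pairwise asymptotic. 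Your route instead stays in the minitwistor picture of Theorem~\ref{thm:null}: the perfect-square computation $\Lambda=\bigl(e^{t}|\m_2'|+e^{-t}|\m_1'|\bigr)^2/|1+\m_1\bar{\m}_2|^2$ is correct (your sign for causality matches the last display in the proof of Proposition~\ref{prop:representation}), and your closed-set/connectedness argument promoting the pointwise dichotomy $\m_1'(s)=0$ or $\m_2'(s)=0$ to a global one is clean and complete, given that non-umbilic parameters are dense (which follows, as you say, from rulings being entirely umbilic or entirely non-umbilic, per the proof of Proposition~\ref{prop:Portnoy}).

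The one step you grant rather than prove --- the identity $H=c(s)/\sqrt{\Lambda}$ --- is the crux, and it is true, but you do not need the lengthy expansion you anticipate: it follows from Lemma~\ref{lem:Massey} plus a short factorization argument. Along a non-umbilic ruling, $g_{tt}=1$ and $I\!I_{tt}=I\!I_{st}=0$ give $H=I\!I_{ss}/(2\Lambda)$ with $I\!I_{ss}=\inner{f_{ss}}{f_s\times f_t}/\sqrt{\Lambda}$, hence
\[
  \frac{1}{H}\,\inner{f_{ss}}{f_s\times f_t}
  =2\Lambda^{3/2}
  =\frac{2\bigl(e^{t}|\m_2'|+e^{-t}|\m_1'|\bigr)^3}{|1+\m_1\bar{\m}_2|^3}.
\]
By Lemma~\ref{lem:Massey}, $1/H=A'e^{t}+B'e^{-t}$ on the whole ruling, while $\inner{f_{ss}}{f_s\times f_t}$ is a Laurent polynomial in $e^{t}$; comparing the roots of $A'x^2+B'$ with those of $\bigl(|\m_2'|x^2+|\m_1'|\bigr)^3$ forces $(A',B')$ to be proportional to $(|\m_2'|,|\m_1'|)$, with the degenerate cases $|\m_1'|=0$ or $|\m_2'|=0$ forcing $B'=0$ or $A'=0$ respectively --- exactly your identity (and consistent with Proposition~\ref{prop:idealcone}, where $1/H\propto e^{-t}\propto\sqrt{\Lambda}$). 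With this step closed, your proof is complete; what it buys over the paper's is a conceptual reduction through the geometry of $(\LH,\mathcal{G}^{\mathfrak{r}},\mathcal{G}^{\mathfrak{i}})$, in the spirit of Section~\ref{sec:null}, whereas the paper's Frenet--Codazzi computation buys explicit structural information (the horocyclic base curve and the relations \eqref{eq:curvature-torsion}) that identifies these surfaces concretely.
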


Let $f : \Sigma \rightarrow \H^3$ be a developable surface of exponential type 
whose umbilic point set has no interior. 
We may assume $\Sigma$ is simply connected, 
taking the universal cover $\H^2$, if necessary.
Here, we consider $\H^2$ as the hyperboloid in 
the Lorentz-Minkowski $3$-space $\L^3$.
The proof is divided into three steps (Claims $1$--$3$).

\vspace{2mm}
\noindent
{\bf Claim 1.}~{\it
There exists a global coordinate system 
$\varphi=(s,t) : \Sigma=\H^2 \rightarrow \R^2$ such that
\begin{equation}\label{eq:expimms}
  (f \circ \varphi^{-1})(s,t)= c(s)\cosh t + v(s)\sinh t
\end{equation}
holds, the induced metric $g$ and the second fundamental form $I\!I$ of $f$ 
are given by 
\[
  g = g_{11}(s,t) ds^2 + dt^2,\qquad I\!I= e^{t} \delta(s) g_{11}(s,t) ds^2,
\] 
respectively, where $\delta$ is a smooth function of $s$.
}

\begin{proof}
Since the umbilic point set of $f$ has no interior, 
the proof of Proposition \ref{prop:Portnoy}
implies that each connected component of umbilic point set is a geodesic in $\H^3$.
Thus by the proof of Lemma \ref{lem:Massey}, 
we can find a coordinate neighborhood $(U ; (s,t)) \subset \H^2$ 
such that $U$ is open dense in $\H^2$ and $g = g_{11}(s,t) ds^2 + dt^2$ hold on $U$. 
By taking $t \mapsto t + {\rm constant}$, if necessary, 
each coordinate system $(s,t)$ can be joined smoothly 
over the umbilic point set.
\end{proof}

\vspace{2mm}
\noindent
{\bf Claim 2.}~{\it
The vector field $v(s)$ in \eqref{eq:expimms} is expressed as
\begin{equation}\label{eq:direction}
 v(s) = \frac{ \vect{n}(s)+\delta(s)\vect{b}(s) }{\sqrt{1+\{\delta(s)\}^2}},
\end{equation}
where $\vect{n}$ and $\vect{b}$ denotes 
the principal and binormal normal vector field of the curve ${c}$ in $\H^3$, respectively. 
Furthermore, the curvature $\kappa$ and the torsion $\tau$ of ${c}$ satisfy
\begin{equation}\label{eq:curvature-torsion}
  \kappa(s)=\sqrt{1+\{\delta(s)\}^2}  ,\qquad \tau(s)=\frac{\delta'(s)}{1+\{\delta(s)\}^2}.
\end{equation}
}

\begin{proof}
We may assume the curve $c$ in $\H^3$ is parametrized by the arc length $s$.
Let $\beta$ be the curve in $\H^2$ 
which is the inverse image of the curve ${c}$ by $f$.
By changing the orientation of $\beta$, if necessary,
we may assume the unit normal vector $N$ of $\beta$ in $\H^2$ satisfies
\begin{equation}\label{eq:conormal}
  f_{*}(N)=v.
\end{equation}
Then the map $Y:\R^2 \rightarrow \H^2 \subset \L^3$ defined by
\[
  Y(s,t)=\beta(s)\cosh t + N(s)\sinh t
\]
gives a parametrization of $\H^2$.
Let $\vect{\nu}$ be the unit normal vector field of $f$.
Then the shape operator $A$ of $f$ satisfies 
$A(Y_s) = \delta(s)e^{t} Y_s$, $A(Y_t) = \vect{0}$.
Let $\kappa_{\beta}$ be the geodesic curvature of $\beta$ 
and $\nabla$ the Levi-Civita connection of $\H^2$.
By the Frenet formula for the curve $\beta$ in $\H^2$,
\begin{equation}\label{eq:Frenet}
 \nabla_s{N}=N'(s)=-\kappa_{\beta}(s)\beta'(s) 
\end{equation}
holds, where we consider $N$ is the $\L^3$-valued function and $N'=dN/ds$, etc.
Thus we have 
$Y_s := \partial Y/\partial s = (\cosh t- \kappa_{\beta}(s)\sinh t )\beta'(s)$,
and hence
\[
  \nabla_t Y_s 
  = \frac{\sinh t -\kappa_{\beta}(s)\cosh t}{\cosh t -\kappa_{\beta}(s)\sinh t} Y_s
\]
holds. Since the shape operator $A$ of $f$ satisfies 
the Codazzi equation \eqref{eq:Codazzi}, it follows that
\begin{eqnarray*}
  \vect{0}=(\nabla_tA)(Y_s)-(\nabla_sA)(Y_t)
  =\nabla_t(\delta(s)e^{t} Y_s)
  =\left( 1 + \frac{\sinh t -\kappa_{\beta}(s)\cosh t}{
        \cosh t -\kappa_{\beta}(s)\sinh t} \right) \delta(s)e^{t} Y_s,
\end{eqnarray*}
where $Y_t=\partial Y/\partial t$. Substituting $t=0$ into this, we have that 
\begin{equation}\label{eq:horocycle}
  \kappa_{\beta}(s) = 1
\end{equation}
for $s$ in $\R$, that is, $\beta$ is congruent to the horocycle. 

Next, we shall calculate the principal normal vector field $\vect{n}$, 
the binormal vector field $\vect{b}$,
curvature $\kappa$ and torsion $\tau$ of the curve $c$ in $\H^3$.
Let $D$ be the Levi-Civita connection of $\H^3$.
By \eqref{eq:Frenet} and \eqref{eq:horocycle}, 
$\nabla_s \beta'(s)=N(s)$ holds. Moreover, by \eqref{eq:conormal}, it holds that
\begin{eqnarray*}   
D_s{c}'(s)
&=&f_{\ast}(\nabla_s \beta'(s))+I\!I(\beta'(s),\beta'(s)) \vect{\nu}(s,0)\\
&=&f_{\ast}(N(s))+\delta(s)\vect{\nu}(s,0)
={v}(s)+\delta(s)\vect{\nu}(s,0),
\end{eqnarray*}
and hence we have
\[
  \kappa(s)
  =\left| D_sc'(s) \right|
  =\sqrt{1+\{\delta(s)\}^2},
\qquad
  \vect{n}(s)
  =\frac{D_sc'(s)}{\kappa(s)}
  =\frac{{v}(s)+\delta(s)\vect{\nu}(s,0)}{\sqrt{1+\{\delta(s)\}^2}}.
\]
If we denote by $\vect{e}(s)=c' (s)$ the unit tangent vector field of ${c}$,
$\vect{b}(s)$ is obtained as
\begin{equation*}
  \vect{b}(s)
  =\vect{e}(s) \times \vect{n}(s)
  =\frac{\vect{\nu}(s,0)-\delta(s){v}(s)}{\sqrt{1+\{\delta(s)\}^2}},
\end{equation*}
where $\times$ is the cross product in $\H^3$ (cf.\ \eqref{eq:cross}). Since
\[
  \left\{
  \begin{array}{ll}
  D_s\vect{\nu}(s,0)
  =-f_{\ast}(A(Y_s)(s,0))
  =-f_{\ast}(\delta(s)Y_s(s,0))
  =-\delta(s)\vect{e}(s)\\
  D_s{v}(s)
  =-f_{\ast}(\nabla_sN)- \inner{A(N)}{\beta'} \vect{\nu}(s,0)
  =f_{\ast}(-\beta'(s))=-\vect{e}(s),
  \end{array}
  \right.
\]
we have
\begin{eqnarray*}
  D_s\vect{b}(s)=\vect{b}'(s)
  =-\frac{\delta'(s)}{1+\{\delta(s)\}^2}\frac{{v}(s)+\delta(s)\vect{\nu}(s,0)}{%
       \sqrt{1+\{\delta(s)\}^2}}
  =-\frac{\delta'(s)}{1+\{\delta(s)\}^2}\vect{n}(s).
\end{eqnarray*}
Thus the torsion $\tau$ of ${c}$ is given as in \eqref{eq:curvature-torsion}.
Since the unit vector field $v(s)$ is included in the normal plane of ${c}$ and satisfies 
\[
  \langle {v}(s),\vect{n}(s) \rangle =\frac{1}{\sqrt{1+\{\delta(s)\}^2}} ,\qquad
  \langle {v}(s),\vect{b}(s) \rangle = -\frac{\delta(s)}{\sqrt{1+\{\delta(s)\}^2}}, 
\]
we have that $v(s)$ is the form given in \eqref{eq:direction}.
\end{proof}

\vspace{2mm}
\noindent
{\bf Claim 3.}~{\it
Any two asymptotic curves are asymptotic to each other in the sense of 
Definition $\ref{def:asymp-geod}$.
}

\begin{proof}
Under the notations in Claim 1 and 2, we have
\[
  (f \circ \varphi^{-1})(s,t)
  = {c}(s)\cosh t + \frac{ \vect{n}(s)+\delta(s)\vect{b}(s) }{\kappa(s)} \sinh t .
\]
For $s \in \R$, set $\gamma_{s}(t):=(f \circ X)(s,t)$.
It is sufficient to prove that, for fixed $s_0 \in \R$, the function
\[ 
  \rho : \R \ni s \longmapsto 
  \inner{{c}(s)+\frac{ \vect{n}(s)+\delta(s)\vect{b}(s) }{\kappa(s)}}{{c}(s_0)
  +\frac{ \vect{n}(s_0)+\delta(s_0)\vect{b}(s_0) }{\kappa(s_0)}} \in \R,
\]
is equivalently zero. Using the Frenet-Serret formula 
\[
  \vect{e}'(s)=c(s)+\kappa(s)\vect{n}(s),\qquad 
  \vect{n}'(s)=-\kappa(s)\vect{e}(s)+\tau(s)\vect{b}(s),\qquad 
  \vect{b}'(s)=-\tau(s)\vect{n}(s) 
\]
for the curve ${c}$ in $\H^3$, we have
\begin{multline}\label{eq:phi}
  \frac{d}{ds}\left(c(s)+\frac{ \vect{n}(s)+\delta(s)\vect{b}(s) }{\kappa(s)}\right) =
  \frac{\kappa(s)\tau(s)\delta(s)-\kappa'(s)}{\kappa^2(s)}\vect{n}(s)\\
  +\frac{\kappa(s)\tau(s)-\kappa(s)\delta'(s)+\kappa'(s)\delta(s)}{\kappa^2(s)}\vect{b}(s).
\end{multline}
On the other hand, we have
\[
  \kappa(s)\tau(s)\delta(s)-\kappa'(s)
  =\kappa(s)\tau(s)-\kappa(s)\delta'(s)+\kappa'(s)\delta(s)
  =0,
\]
by \eqref{eq:curvature-torsion} in Claim $2$.
Substituting this into \eqref{eq:phi}, 
we have $\rho'(s) = 0$ for all $s$. 
Besides $\rho(s_0)=0$, we obtain $\rho(s) = 0$ for all $s$.
\end{proof}

%%%%%%%%%%%%%%%%%%%%%%%%%%%%%%%%%%
\subsection{A non-real-analytic example}
\label{subsec:ex-exp}
\hspace{2mm}

\begin{example}
\label{ex:NRA}
The assumption of analyticity in Theorem \ref{thm:exponential} cannot be 
removed since non-real-analytic developable surfaces of exponential type 
might have more than one asymptotic points.
Figure~\ref{fig:expn2} shows an example asymptotic to distinct two points 
in the ideal boundary.

\begin{figure}[htb]
 \begin{tabular}{cc}
  \resizebox{4cm}{!}{\includegraphics{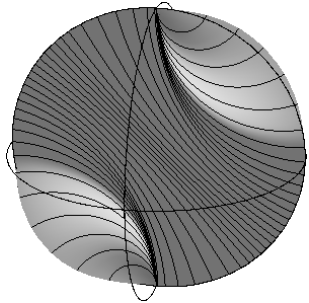}} 
 \end{tabular}  
 \caption{
      A non-real-analytic developable surface of 
      exponential type asymptotic to $0$ and $\infty$.}
 \label{fig:expn2}
\end{figure}
\end{example}

\noindent
The corresponding curve $\alpha(s)$ in $\LH$ is given by 
$\alpha(s)=(x_1(s)+iy_1(s),x_2(s)+iy_2(s))$, where

\[
  x_1(s)= 
  \begin{cases}
    0
    & (s \leq -1) \\
    (\sqrt{2}-1)(s+1)/(1+e^{\frac{1}{s}+\frac{1}{s+1}}) 
    &(-1<s<0)\\
    (\sqrt{2}-1)(s+1)
    & (0\leq s), 
  \end{cases} \qquad 
  y_1(s)= 
  \begin{cases}
    0
    & (s \leq \sqrt{2}) \\
    2 e^{\frac{\sqrt{2}+1}{\sqrt{2}-s}} 
    & (\sqrt{2}<s),
  \end{cases}
\]
\[
  x_2(s)= 
  \begin{cases}
    (\sqrt{2}-1)(1-s) 
    & (s \leq 0) \\
    (\sqrt{2}-1)(1-s)/(1+e^{\frac{1}{1-s}-\frac{1}{s}}) 
    &(0<s<1)\\
    0
    & (1\leq s), 
  \end{cases} \qquad 
  y_2(s)= 
  \begin{cases}
    2 e^{\frac{\sqrt{2}+1}{\sqrt{2}-s}}
    & (s \leq -\sqrt{2}) \\
    0
    & (-\sqrt{2}<s).
  \end{cases}
\]

%%%%%%%%%%%%%%%%%%%%%%%%%%%%%%%%%%

\end{document}